\newtheorem{thm}{Theorem}
\newtheorem{dfn}[thm]{Definition}
\newtheorem{lem}[thm]{Lemma}
\newtheorem{exa}[thm]{Example}
\newtheorem{prop}[thm]{Proposition}
\newtheorem{rem}[thm]{Remark}
\newtheorem{cor}[thm]{Corollary}
\newcommand{\hide}[1]{}
\newcommand{\sN}{\mathsf{N}}
\newcommand{\sF}{\mathsf{F}}
\newcommand{\sA}{\mathsf{A}}
\newcommand{\sB}{\mathsf{B}}
\newcommand{\R}{\mathds{R}}
\newcommand{\N}{\mathds{N}}
\newcommand{\cF}{\mathcal{F}}
\newcommand{\cL}{\mathcal{L}}
\newcommand{\cN}{\mathcal{N}}
\newcommand{\cM}{\mathcal{M}}
\newcommand{\cK}{\mathcal{K}}
\newcommand{\cU}{\mathcal{U}}
\newcommand{\cS}{\mathcal{S}}
\newcommand{\cV}{\mathcal{V}}
\newcommand{\cW}{\mathcal{W}}
\newcommand{\cX}{\mathcal{X}}
\newcommand{\cZ}{\mathcal{Z}}
\newcommand{\gB}{\mathfrak{B}}
\newcommand{\ov}{\overline}
\newcommand{\cA}{\mathcal{A}}
\newcommand{\Lin}{\mathrm{Lin}}
\newcommand{\nset}{\ensuremath{\mathbb{N}}}
\newcommand{\Zset}{\ensuremath{\mathcal{Z}}}
\newcommand{\pset}{\ensuremath{\mathbb{P}}}
\newcommand{\rset}{\ensuremath{\mathbb{R}}}
\newcommand{\supp}{\ensuremath{\mathrm{supp}\,}}
\newcommand{\rank}{\ensuremath{\mathrm{rank}\,}}
\newcommand{\range}{\ensuremath{\mathrm{range}\,}}
\newcommand{\codim}{\ensuremath{\mathrm{codim}\,}}
\author{Philipp J.\ di~Dio}
\author{Konrad Schm\"udgen}
\address{Universit\"at Leipzig, Mathematisches Institut, Augustusplatz 10/11, D-04109 Leipzig, Germany}
\address{Max Planck Institute for Mathematics in the Sciences, Inselstra{\ss}e 22, D-04103 Leipzig, Germany}
\date{}
\begin{document}

\begin{abstract}
This paper is about  the  moment problem on a finite-dimensional vector space of continuous functions. We investigate the  structure of the convex cone of moment functionals (supporting hyperplanes, exposed faces, inner points) and treat various important special topics on moment functionals (determinacy, set of atoms of representing measures, core variety).
\end{abstract}

\maketitle

\textbf{AMS  Subject  Classification (2000)}.
 44A60, 14P10.\\

\textbf{Key  words:} truncated moment problem, moment cone, convex cone

\section{Introduction}%%%
%%%%%%%%%%%%%%%%%%%%%%%%%

Let ${\sf N}$ be a finite subset of $\N_0^n$,  $n\in \N$, and  ${\sA}=\{x^\alpha: \alpha \in \sN\}$, $\cA = \Lin\,\sA$ the  span of  associated monomials, where $x^\alpha=x_1^{\alpha_1}\cdots x_n^{\alpha_n}$,  $\alpha=(\alpha_1,\dots,\alpha_n)\in \N_0^n$.  Suppose that $\cK$ is a closed subset of $\R^n$. Let $s=(s_{\alpha})_{\alpha \in \sN}$ be a real sequence and let $L_s$ denote  the corresponding Riesz functional  on  ${\cA}$  defined by  $L_s(x^\alpha)=s_\alpha$, $\alpha\in \sN$.

The  truncated moment problem asks: 
{\it When does there exist a (positive) Radon measure $\mu$  on $\cK$ such that $x^\alpha$ is $\mu$-integrable and}
\begin{equation}\label{momentsalpha}
s_\alpha =\int_{\R^n} x^\alpha \, d\mu\quad \text{ for all}\quad  \alpha \in \sN?
\end{equation} 
Clearly, (\ref{momentsalpha}) is equivalent to
\begin{equation}\label{Lrep}
L_s(f)=\int_\cK f(x)\,d\mu\quad {\rm for}\quad f\in {\cA}.
\end{equation}
The Richter--Tchakaloff theorem (Proposition \ref{hrichtertheorem}) implies that in the affirmative case there is always a finitely atomic measure $\mu$ satisfying (\ref{momentsalpha}) and (\ref{Lrep}).

The multidimensional truncated moment problem was first studied in the unpublished Thesis of J.\ Matzke \cite{matzke} and by R.\ Curto and L.\ Fialkow \cite{curto2}, \cite{curto3}, see \cite{laurent} for a nice survey. The one-dimensional case is treated in the monographs \cite{karlin}, \cite{kreinnudel}.

In the present paper we  consider  the truncated moment problem in a more general setting. That is, we study moment functionals  on a finite-dimensional  vector space $E$ of real-valued continuous functions on a locally compact topological Hausdorff space $\cX$. 
The bridge to the truncated $\cK$-moment problem for polynomials as formulated above is obtained by letting  $E$ the vector space of restrictions $f\lceil \cK$  of functions $f\in {\cA}$ to   $\cX:=\cK$. In this manner the results of this paper give new results  concerning the truncated $\cK$-moment problem for polynomials.

Let us briefly describe the structure and the contents of this paper. In Section
\ref{prelimin} we recall basic notation, definitions and facts on moment sequences and moment functionals. Let $L$ be a moment functional on $E$.
The set $\cW(L)$ of possible atoms of  representing measures of  $L$ is investigated in Section \ref{setofatoms}. In Section \ref{Edeterminacy}, we characterize the determinacy of $L$  in terms of the set $\cW(L)$ (Theorem \ref{strictposLnonunique}). Three other important notions associated with   $L$ are studied in Sections
\ref{exposedfacesmomentcone} and \ref{corersetofatoms}. These are the cone $\cN_+(L)$ of nonnegative functions of $E$ which are annihiliated by $L$, the zero set $\cV_+(L)$ of $\cN_+(L)$  and the core variety $\cV(L)$ introduced by L. Fialkow \cite{fialkoCoreVari}. It is easily seen that  $\cW(L) \subseteq \cV_+(L)$. Equality holds if and only if the moment sequence of $L$ lies in the relative interior of an exposed face of the moment cone (Theorem \ref{thm:WV+cases}). It is proved that  the set $\cW(L)$ is equal to the core variety $\cV(L)$ (Theorem \ref{wlequadlvl}). In the last Section 
\ref{differentailstructure} we assume that $\cX=\rset^n$ and  $E\subseteq C^1(\rset^n;\rset)$.  Then the total derivative of the moment map is used to analyze the structure of  the moment cone. A number of characterizations of inner points of the moment cone are given (Theorem \ref{thm:W(s)V(s)innermomentsequence}).

\section{Moment sequences and moment functionals}\label{prelimin}%%%
%%%%%%%%%%%%%%%%%%%%%%%%%%%%%%%%%%%%%%%%%%%%%%%%%%%%%%%%%%%%%%%%%%%%

Throughout this paper, we will suppose the following:
\begin{itemize}
\item $\cX$ is a \textbf{locally compact} topological Hausdorff space,

\item $E$ is a \textbf{finite-dimensional}  vector space of  real continuous functions  on $\cX$,

\item ${\sF}:=\{f_1,\dots,f_m\}$ is  a fixed \textbf{vector space basis} of $E$. 
\end{itemize}

For a real sequence $s=(s_j)_{j=1}^m$ the Riez functionals $L_s$ is the  linear functional $L_s$ on $E$ defined by $L_s(f_j)=s_j,j=1,\dots,m.$ This  one-to-one correspondence between real sequences and real linear functionals on $E$ is often used in what follows.

Let  $M_+(\cX)$ denote the set of   Radon measures on $\cX$. By a \textit{Radon measure} on $\cX$ we mean a  measure $\mu:\gB(\cX)\to [0,+\infty]$ on the Borel $\sigma$-algebra $\gB(\cX)$ such that 
\[
\mu(M)= {\rm sup }~\{\mu(K): K\subseteq M,~ K~\text{compact}\} \quad\text{for}~~~ M\in \gB(\cX).
\]
Note that in our terminology  Radon measures are always nonnegative!

For $\mu \in M_+(\cX),$ let $\cL^1(\cX,\mu)$ denote the  real-valued $\mu$-integrable Borel functions on $\cX$. For $x\in \cX$, let $\delta_x\in M_+(\cX)$ be defined by $\delta_x(M)=1$ if $x\in M$ and $\delta_x(M)=0$ if $x\notin M$. A  measure $\mu\in M_+(\cX)$ such that $|{\rm supp}\, \mu|=k$  is called  {\it $k$-atomic}; this means that there are $k$ pairwise different points $x_1,\dots,x_k$ of $\cX$ and positive numbers $c_1,\dots,c_k$ such that $\mu=\sum_{j=1}^k c_j\delta_{x_j}$. We  consider the zero measure as  $0$-atomic measure. 
For $f\in C(\cX;\R)$ we set $\cZ(f):=\{x\in \cX: f(x)=0\}.$

\begin{dfn}\label{momentfunct}
We  say that a real sequence $s=(s_j)_{j=1}^m$ is  a  \emph{moment sequence} and the linear functional $L_s$ is a \emph{moment functional} if there exists a measure $\mu \in M_+(\cX)$ such that $E\subseteq \cL^1(\cX,\mu)$ and
\[s_j=\int_{\cX} f_j(x)~ d\mu\quad \text{for}~~j=1,\dots,m,\]
or equivalently, 
\[L_s(f)=\int_{\cX} f(x)~ d\mu,\quad \text{for}\ f\in E.\]
Any such measure $\mu$ is called a \emph{representing measure} of $s$ resp. $L_s$. The set of  all representing measures of $s$ resp. $L_s$  is denoted by $\cM_s=\cM_{L_s}$.

The \emph{moment cone} $\cS$ is the set of all moment sequences. The set of all moment functionals is denoted by $\cL$.
\end{dfn}

Clearly, $\cS$ is a  cone in $\rset^m$ and  $\cL$ is a cone in the dual space of $E$. 
The  map $s\mapsto L_s$ is a bijection of $\cS$ to $\cL$.

Thus, we have a one-to-one correspondence between  moment sequences $s$ and moment functionals $L_s$.  At some places we prefer to work with moment sequences, while at others moment functionals are more convenient. Let us adopt the following notational convention: If we introduce  a set  depending on a general moment sequence $s$ (or  moment functional $L_s$), we will take  the same set for the moment functional $L_s$ (or moment sequence $s$). That is, for the sets introduced in what follows we define $\cN_+(s)=\cN_+(L_s),\cV_+(s)=\cV_+(L_s),\cW(s)=\cW(L_s),\cV(s)=\cV(L_s).$

\begin{rem}\label{reramrwell-defined}
Let us discuss briefly how the results on  moment functionals on $E$ apply to the truncated $\cK$-moment problem  on $\cA$ stated in the introduction. We set $\cX=\cK$ and consider the  subspace $E:={\cA} \lceil \cX$  of
$C(\cX;\R)$. Let $L$ be a linear functional on $\cA$. If
\begin{equation}\label{constsi}
L(f)=0\quad \text{for}~~~f\in {\cA}\ \text{with}\ f\lceil \cK=0,
\end{equation} 
then there exists a well-defined (!) linear functional $\tilde{L}$ on $E$ given by
\begin{equation}\label{defiLtilde}
\tilde{L}(f\lceil \cK):=L(f),\quad f\in {\cA},
\end{equation}
and the results on moment functionals on $E$ can be applied to $\tilde{L}$.  There are two important cases where (\ref{constsi}) is satisfied. First, if $f\lceil \cK=0$ implies $f=0$; this happens (for instance) if $\cK$ has a nonempty interior in $\R^n$. Secondly, if $L(f)\geq 0$ for all $f\in \cA$ such that $f\geq 0$ on $\cK$. Then (\ref{constsi}) holds. (Indeed, if $f\lceil \cK=0$, then $\pm f \geq 0 $ on $\cK$,  hence $L(\pm f)\geq 0$, so that $L(f)=0$.) This second case is valid if $L$ is a moment functional which has representing measure supported on $\cK$.
\end{rem}

The following well-known fact will be often  used.

\begin{lem}\label{zerosupp}
Let $f\in C(\cX;\R)$ and $\mu\in M_+(\cX)$. Suppose that $f(x)\geq 0$ for $x\in \cX$ and   $\int f(x)\, d\mu=0$. Then
\[\supp \mu \subseteq \cZ(f)\equiv\{x\in \cX: f(x)=0\}.\]
\end{lem}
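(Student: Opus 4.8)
The plan is to work directly from the definition of the support of a Radon measure, namely that $x\in\supp\mu$ if and only if $\mu(U)>0$ for every open neighborhood $U$ of $x$; equivalently, the complement $\cX\setminus\supp\mu$ is the union of all open $\mu$-null sets. Since $\cZ(f)^c=\{x\in\cX:f(x)>0\}$ is open by continuity of $f$, it suffices to show that every point $x_0$ with $f(x_0)>0$ admits an open neighborhood of measure zero, and hence lies outside $\supp\mu$.

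First I would fix such an $x_0$ and set $\varepsilon:=\tfrac12 f(x_0)>0$. By continuity of $f$ at $x_0$, there is an open neighborhood $U$ of $x_0$ with $f(x)\geq\varepsilon$ for all $x\in U$. The key computation is then the chain of inequalities
\[
0\;=\;\int_\cX f\,d\mu\;\geq\;\int_U f\,d\mu\;\geq\;\varepsilon\,\mu(U)\;\geq\;0,
\]
where the first inequality uses the global hypothesis $f\geq 0$ (so that restricting the integration domain to $U$ can only decrease the integral), and the second uses $f\geq\varepsilon$ on $U$. This forces $\mu(U)=0$, so $x_0\notin\supp\mu$.

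Since $x_0$ was an arbitrary point of the open set $\{f>0\}=\cZ(f)^c$, we conclude that $\{f>0\}$ is disjoint from $\supp\mu$, that is, $\supp\mu\subseteq\cZ(f)$, as claimed. I do not expect a genuine obstacle here: the argument is elementary once the correct characterization of $\supp\mu$ is invoked, and the two ingredients that do the work are the continuity of $f$ (providing a neighborhood on which $f$ is bounded below by a positive constant) and the global nonnegativity $f\geq 0$ (ensuring $\int_U f\,d\mu\leq\int_\cX f\,d\mu=0$). The finiteness needed to make these manipulations rigorous is automatic, since the total integral equals $0$ and $f\geq 0$, so $f$ is $\mu$-integrable.
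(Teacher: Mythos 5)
Your proof is correct and follows essentially the same route as the paper's: both arguments take a point $x_0$ with $f(x_0)>0$, use continuity to find a neighborhood $U$ on which $f\geq\varepsilon>0$, and deduce $\mu(U)=0$ from the chain $0=\int_\cX f\,d\mu\geq\int_U f\,d\mu\geq\varepsilon\mu(U)\geq 0$. The only cosmetic difference is your explicit choice $\varepsilon=\tfrac12 f(x_0)$ and the remark on integrability, neither of which changes the argument.
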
 
\begin{proof} Let $x_0\in \cX$. Suppose that $x_0\notin\cZ(f)$. Then $f(x_0)>0$. Since $f$ is continuous, there exist an open neighborhood $U$ of $x_0$ and a number $\varepsilon >0$ such that $f(x)\geq \varepsilon $ on $U$. Then 
\[0=\int_\cX f(x)\, d\mu\geq \int_{U}\, f(x) \,d\mu \geq \varepsilon \mu(U)\geq 0,\]
so that  $\mu(U)=0$. Therefore, since $U$ is an open set containing $x_0$, it follows at once from the definition of the support that $x_0\notin \supp\, \mu$.
\end{proof}

A crucial result is the following {\it Richter--Tchakaloff theorem}; it was  proved in full generality by H.\ Richter \cite{richter} and in the compact case by V.\ Tchakaloff  \cite{tchakaloff}.

\begin{prop}\label{hrichtertheorem}
Suppose that  $(\cX,\mu)$ is a measure space and $V$ is a finite-dimensional real   subspace of $\cL^1(\cX,\mu)$. Let $L^\mu$ be the linear functional on $V$ defined by $L^\mu(f)=\int f\, d\mu$, $f\in V$. Then there is a $k$-atomic measure $\nu=\sum_{j=1}^k m_j\delta_{x_j}\in M_+(\cX)$, where $k\leq \dim V $, such that $L^\mu=L^\nu$, that is,
\[\int_\cX f~ d\mu=\int_\cX f~ d\nu\equiv \sum_{j=1}^k m_jf(x_j), \quad f\in V.\]
\end{prop}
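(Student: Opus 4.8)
The plan is to translate the problem into convex geometry in $\R^N$, where $N=\dim V$, and to solve it by induction on $N$. Fix a basis $f_1,\dots,f_N$ of $V$ and define the \emph{moment map} $\Phi\colon \cX\to\R^N$, $\Phi(x)=(f_1(x),\dots,f_N(x))$. Writing $s:=\big(\int f_1\,d\mu,\dots,\int f_N\,d\mu\big)=\int_\cX\Phi\,d\mu$, the assertion is equivalent to showing that $s$ lies in the convex cone $C:=\mathrm{cone}(\Phi(\cX))$ of \emph{finite} nonnegative combinations of the points $\Phi(x)$, $x\in\cX$. Indeed, once $s\in C$ is known, Carath\'eodory's theorem for cones expresses $s$ as a nonnegative combination $s=\sum_{j=1}^k m_j\Phi(x_j)$ of at most $k\le N$ of the generators; after merging coincident points and discarding vanishing weights, and setting $\nu=\sum_j m_j\delta_{x_j}$, one obtains the desired $k$-atomic measure with $k\le\dim V$, because $\int f\,d\nu=\sum_j m_j f(x_j)$ agrees with $\int f\,d\mu$ on the basis, hence on all of $V$.

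First I would establish the weaker statement $s\in\overline C$, the closure of the cone. If this failed, the separation theorem for the point $s$ and the closed convex cone $\overline C$ would provide $c\in\R^N\setminus\{0\}$ with $\langle c,y\rangle\ge 0$ for all $y\in\overline C$ and $\langle c,s\rangle<0$. Setting $g:=\sum_i c_i f_i\in V$, the first inequality reads $g(x)=\langle c,\Phi(x)\rangle\ge 0$ for every $x\in\cX$, whence $\langle c,s\rangle=\int_\cX g\,d\mu\ge 0$, a contradiction. Thus $s\in\overline C$.

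The remaining, and central, point is to upgrade $s\in\overline C$ to $s\in C$; here the obstacle is that the conical hull of an arbitrary subset of $\R^N$ need not be closed. I would split according to the position of $s$ in $\overline C$. If $s$ lies in the relative interior of $\overline C$, then, since the relative interiors of a convex set and of its closure coincide, $s\in\mathrm{ri}(C)\subseteq C$ and the Carath\'eodory argument above finishes the proof. If instead $s$ lies in the relative boundary of $\overline C$, the supporting hyperplane theorem yields $c\neq 0$ with $\langle c,\cdot\rangle\ge 0$ on $\overline C$; because $\overline C$ is a cone containing $0$ and $s\in\overline C$ lies on a supporting hyperplane, $\langle c,s\rangle=0$. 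The associated $g=\sum_i c_i f_i\in V$ then satisfies $g\ge 0$ on $\cX$ and $\int_\cX g\,d\mu=\langle c,s\rangle=0$, so $g=0$ $\mu$-almost everywhere (the measure-theoretic form of the reasoning in Lemma \ref{zerosupp}). Consequently $\mu$ is concentrated on $\cZ(g)$, and since $g\not\equiv 0$ as an element of $V$, the restriction map $V\to V\lceil\cZ(g)$ has nontrivial kernel, so $W:=V\lceil\cZ(g)$ satisfies $\dim W\le N-1$.

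I would then apply the induction hypothesis to the measure space $(\cZ(g),\mu\lceil\cZ(g))$ and the space $W$ of dimension at most $N-1$. This produces a $k$-atomic measure $\nu$, $k\le\dim W\le N-1\le\dim V$, with atoms in $\cZ(g)\subseteq\cX$ representing $\mu\lceil\cZ(g)$ on $W$; as $\mu$ lives on $\cZ(g)$, this $\nu$ reproduces the full moment vector $s$ and completes the induction, the base case $N=0$ being trivial with $\nu=0$. The main difficulties I anticipate are precisely the two features forced by the abstract, non-topological measure space, which preclude the classical compactness arguments of Tchakaloff: establishing membership in the \emph{closed} cone by separation rather than by a limiting device, and passing from $\overline C$ to $C$ by the relative-interior dichotomy together with the dimension-dropping induction.
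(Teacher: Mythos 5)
The paper itself contains no proof of Proposition \ref{hrichtertheorem}: it is quoted as the Richter--Tchakaloff theorem and justified only by the references \cite{richter} and \cite{tchakaloff}. So there is no in-paper argument to compare yours against; what can be said is that your proof is correct, self-contained, and is essentially the classical argument of Richter in modern convex-analytic form (it is also the standard textbook proof of this result). The two points on which everything hinges are both sound. First, membership $s\in\overline{C}$ follows from strict separation exactly as you argue, using that a linear functional bounded below on a cone is nonnegative on it. Second, the upgrade from $\overline{C}$ to $C$: in the relative interior case you correctly invoke $\mathrm{ri}(C)=\mathrm{ri}(\overline{C})$ and Carath\'eodory's theorem for cones (which also gives the bound $k\le\dim V$ and, after merging coincident points, a genuinely $k$-atomic $\nu$); in the relative boundary case the crucial step is that $c\neq 0$ forces $g=\sum_i c_i f_i$ to be a nonzero \emph{function} on $\cX$, because $f_1,\dots,f_N$ are linearly independent as actual functions --- which is consistent with the paper's convention that $\cL^1(\cX,\mu)$ consists of genuine Borel functions rather than a.e.-equivalence classes. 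Since $g\lceil\cZ(g)=0$ and $g\neq 0$ in $V$, the restriction map $V\to V\lceil\cZ(g)$ has nontrivial kernel and the dimension really drops, while $g\ge 0$ and $\int g\,d\mu=\langle c,s\rangle=0$ give $\mu(\cX\setminus\cZ(g))=0$, so the inductively obtained atomic measure on $\cZ(g)$ reproduces all of $s$. (The same observation disposes of the only potential degeneracy: a supporting hyperplane with $c\neq 0$ can never contain all of $\Phi(\cX)$, so ``improper support'' cannot occur; equivalently, $\overline{C}$ spans $\R^N$.) Compared with Tchakaloff's original compactness argument, your route has exactly the advantages you name: it needs no topology on $\cX$, matching the generality in which the proposition is stated, and it produces the atom bound $k\le\dim V$ directly.
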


An immediate consequence of Proposition \ref{hrichtertheorem} is the following.

\begin{cor}\label{richtercor}
Each moment functional on $E$ of   has a  $k$-atomic representing measure, where $k\leq \dim\, E$.
\end{cor}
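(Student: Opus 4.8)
The plan is to obtain Corollary \ref{richtercor} as a direct specialization of the Richter--Tchakaloff theorem (Proposition \ref{hrichtertheorem}). Suppose $L$ is a moment functional on $E$. By the very definition of a moment functional (Definition \ref{momentfunct}), there exists a representing measure $\mu\in M_+(\cX)$ with $E\subseteq \cL^1(\cX,\mu)$ and $L(f)=\int_\cX f\,d\mu$ for all $f\in E$. My goal is to replace this (possibly non-atomic) measure $\mu$ by a finitely atomic one with few atoms that represents the same functional.

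The key step is to apply Proposition \ref{hrichtertheorem} with the measure space $(\cX,\mu)$ and the finite-dimensional subspace $V:=E$ of $\cL^1(\cX,\mu)$. The inclusion $E\subseteq\cL^1(\cX,\mu)$ is exactly the integrability condition guaranteed by the definition of a moment functional, so the hypotheses of the proposition are met. The functional $L^\mu$ appearing in the proposition is, by construction, equal to $L$ on $E$. The proposition then yields a $k$-atomic measure $\nu=\sum_{j=1}^k m_j\delta_{x_j}\in M_+(\cX)$ with $k\leq\dim V=\dim E$ such that $\int_\cX f\,d\mu=\int_\cX f\,d\nu$ for all $f\in E$. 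Consequently $L(f)=\int_\cX f\,d\nu$ for every $f\in E$, so $\nu$ is a $k$-atomic representing measure of $L$ with $k\leq\dim E$, which is the assertion.

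I do not anticipate any real obstacle here, since Corollary \ref{richtercor} is explicitly described in the text as an immediate consequence of Proposition \ref{hrichtertheorem}; the only substantive content is the verification that the abstract hypotheses of the proposition are satisfied in the present setting. The one point worth stating carefully is the identification $V=E$: the proposition is phrased for an arbitrary finite-dimensional subspace $V\subseteq\cL^1(\cX,\mu)$, and the whole space $E$ qualifies precisely because a moment functional comes equipped with a representing measure integrating all of $E$. No passage to a proper subspace, no approximation, and no compactness of $\cX$ is needed, as the Richter version already covers the general locally compact case.
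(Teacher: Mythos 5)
Your proof is correct and is exactly the argument the paper intends: the corollary is stated there as an immediate consequence of Proposition \ref{hrichtertheorem}, obtained by taking $V=E$ and any representing measure $\mu$ of $L$ (which exists by Definition \ref{momentfunct}) as the measure space. Your careful check that $E\subseteq\cL^1(\cX,\mu)$ supplies the integrability hypothesis is the only substantive point, and you handle it correctly.
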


For $C\subseteq E $,  a  functional $L$ on $E$ is called  \emph{$C$-positive}\, if $L(f)\geq 0$ for  $f\in C.$ Put
\[E_+:=\{f \in E: f(x)\geq 0\quad \text{for}\quad x\in \cX\}.\]
Obviously, each moment functional is $E_+$-positive.

The dual cone of the cone $E_+$ is the cone  in the dual space $E^*$ of $E$  defined by
\[(E_+)^\wedge= \{L\in E^*: L(f)\geq 0~~\text{for}~~ f\in E_+\}.\]

\begin{dfn}
A linear functional $L$ on $E$ is called \emph{strictly $E_+$-positive} if  
\begin{equation}\label{strictposL}
L(f)>0\quad \text{for all}\quad f\in E_+,~ f\neq 0.
\end{equation}
\end{dfn}

Note that $E_+=\{0\}$ is possible and then every $L$ is strictly $E_+$-positive.

\begin{lem}\label{strictypositivef}
Let $\|\cdot \|$ be a norm   on $E.$ For a linear functional $L$ on $E$ the following are equivalent:
\begin{itemize}
\item[\em (i)] $L$ is strictly $E_+$-positive.

\item[\em (ii)] There exists a number $c>0$ such that
\begin{equation}\label{normLmuesti} 
  L(f)\geq c\|f\| \quad {for}\quad f\in E_+.
\end{equation}

\item[\em (iii)]  $L$ is an interior point of the cone $(E_+)^\wedge$ in $E^*$.
\end{itemize}
\end{lem}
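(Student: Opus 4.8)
The plan is to prove the cycle $(i)\Rightarrow(ii)\Rightarrow(iii)\Rightarrow(i)$, the decisive structural input throughout being that $E$ is finite-dimensional. Before starting the cycle I would record two elementary facts. First, $E_+$ is a closed cone: each evaluation $f\mapsto f(x)$ is a continuous linear functional on the finite-dimensional space $E$, so $E_+=\bigcap_{x\in\cX}\{f\in E:f(x)\geq 0\}$ is an intersection of closed half-spaces. Second, since all norms on a finite-dimensional space are equivalent and the closed unit sphere is compact, the set $S:=\{f\in E_+:\|f\|=1\}$ is compact, and every linear functional on $E$ (in particular $L$) is continuous.

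For $(i)\Rightarrow(ii)$, if $E_+=\{0\}$ then the inequality in (ii) holds vacuously for every $c>0$ (its only instance is $f=0$), so I may assume $S\neq\emptyset$. Being continuous, $L$ attains a minimum $c:=\min_{f\in S}L(f)$ on the compact set $S$, say at $f_0\in S$. Because $f_0\in E_+$ and $f_0\neq 0$, hypothesis (i) forces $c=L(f_0)>0$. For arbitrary $f\in E_+$ with $f\neq 0$ one has $f/\|f\|\in S$, hence $L(f)\geq c\|f\|$, while the case $f=0$ is trivial. The reverse implication $(ii)\Rightarrow(i)$ is immediate, since $f\neq 0$ gives $\|f\|>0$ and thus $L(f)\geq c\|f\|>0$.

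For $(ii)\Rightarrow(iii)$, equip $E^*$ with the dual norm $\|M\|_*:=\sup\{|M(f)|:\|f\|\leq 1\}$. I claim the open ball of radius $c$ about $L$ lies in $(E_+)^\wedge$: for $M\in E^*$ with $\|M\|_*<c$ and any $f\in E_+$ we estimate $(L+M)(f)=L(f)+M(f)\geq c\|f\|-\|M\|_*\|f\|=(c-\|M\|_*)\|f\|\geq 0$, so $L+M\in(E_+)^\wedge$ and $L$ is interior. For $(iii)\Rightarrow(i)$, suppose $B(L,\delta)\subseteq(E_+)^\wedge$ for some $\delta>0$ and take $f\in E_+$, $f\neq 0$. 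Since $L\in(E_+)^\wedge$ we already have $L(f)\geq 0$, so it suffices to exclude $L(f)=0$. Choose a norming functional $M\in E^*$ with $\|M\|_*=1$ and $M(f)=\|f\|>0$ (available in finite dimensions) and set $L':=L-\tfrac{\delta}{2}M$. Then $\|L'-L\|_*<\delta$, so $L'\in(E_+)^\wedge$, yet $L'(f)=L(f)-\tfrac{\delta}{2}\|f\|$; if $L(f)=0$ this is strictly negative, contradicting $f\in E_+$. Hence $L(f)>0$.

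I do not expect a serious obstacle here: the content is entirely finite-dimensional convexity. The only two places requiring care are the use of compactness of $S$ together with continuity of $L$ in $(i)\Rightarrow(ii)$, and the existence of a norming functional in $(iii)\Rightarrow(i)$ — both valid precisely because $\dim E<\infty$. The degenerate case $E_+=\{0\}$ must be flagged but is harmless, as all three conditions then hold trivially.
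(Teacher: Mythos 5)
Your proof is correct. For (i)$\Rightarrow$(ii) and (ii)$\Rightarrow$(iii) it is essentially identical to the paper's argument: closedness of $E_+$ via continuity of point evaluations, compactness of the unit sphere of $E_+$, attainment of the minimum of $L$ there, and then the dual-norm ball of radius $c$ about $L$. The only genuine divergence is in (iii)$\Rightarrow$(i). The paper again exploits the function-space structure: given $f\in E_+$, $f\neq 0$, it chooses $x\in\cX$ with $f(x)>0$; the point evaluation $l_x$ lies in $(E_+)^\wedge$, and interiority of $L$ gives $L-\varepsilon l_x\in(E_+)^\wedge$ for some $\varepsilon>0$, hence $L(f)\geq\varepsilon f(x)>0$. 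You instead perturb $L$ by a Hahn--Banach norming functional $M$ with $\|M\|_*=1$ and $M(f)=\|f\|$, concluding $L(f)\geq\tfrac{\delta}{2}\|f\|>0$. Both are valid; the trade-off is that the paper's step is more concrete and reuses the device (point evaluations lie in the dual cone) that also drives its other proofs, whereas your argument never uses that $E$ consists of functions on $\cX$: it shows, for an arbitrary cone in a finite-dimensional normed space, that interior points of the dual cone are strictly positive on nonzero cone elements, and it even recovers the quantitative estimate (ii), with $c=\delta/2$, directly from (iii). Minor remark: your separate implication (ii)$\Rightarrow$(i) is redundant once the cycle (i)$\Rightarrow$(ii)$\Rightarrow$(iii)$\Rightarrow$(i) is closed, though it does no harm.
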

\begin{proof}
If $E_+=\{0\}$, then all assertions are trivially true. So we assume $E_+\neq\{0\}$.

(i)$\to$(ii):  Consider  the set $U_+=\{f\in E_+:\|f\|=1\}$. Since each  point evaluation  $l_x, x\in \cX,$ is continuous on the finite-dimensional normed space $(E,\|\cdot\|)$, $E_+$ is closed in $E$. Hence, $U_+$ is a bounded  closed, hence compact, subset  of $(E,\|\cdot\|)$. Therefore, since the functional $L$ is also  continuous on  $(E,\|\cdot\|),$ the infimum of $L(f)$ on $U_+$ is attained, say at  $f_0\in U_+$. Then $f_0\neq 0$ and $f\in E_+$, so that  $c:=L(f_0)>0$ by (i). Hence $L(f)\geq c$ for $f\in U_+$. By scaling this yields (\ref{normLmuesti}).

(ii)$\to$(iii): We equip $E^*$ with the  dual norm of $\|\cdot \|$. Suppose that $L_1\in E^*$ and $\|L-L_1\|<c$. Then (\ref{normLmuesti}) implies that $L_1(f)\geq 0$ for $f\in E_+$,  that is, $L_1\in (E_+)^\wedge.$ This shows that $L$ is an interior point of the cone $(E_+)^\wedge$.

(iii)$\to$(i): Let $f\in E_+, f\neq 0$. Then there exists $x\in \cX$ such that $f(x)>0$. Since the  point evaluation $l_x$ at $x$ is in $(E_+)^\wedge$ and $L$ is an inner point of $(E_+)^\wedge$, there exists a number $\varepsilon >0$ such that $(L-\varepsilon l_x)\in(E_+)^\wedge$. Hence $L(f)\geq \varepsilon f(x)>0$.
\end{proof}

Let $\ov{\cL}$ denote the closure of the cone $\cL$ in the norm topology of $E^*$.

\begin{lem}\label{e+wedgel}
$(E_+)^\wedge= {\ov{\cL}}.$
\end{lem}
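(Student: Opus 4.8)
The plan is to prove the two inclusions $\ov{\cL}\subseteq(E_+)^\wedge$ and $(E_+)^\wedge\subseteq\ov{\cL}$ separately. The first is the routine direction: every moment functional is $E_+$-positive (as already observed right after the definition of $E_+$), so $\cL\subseteq(E_+)^\wedge$; and $(E_+)^\wedge$ is closed, being the intersection $\bigcap_{f\in E_+}\{L\in E^*:L(f)\geq 0\}$ of closed half-spaces. Taking closures therefore gives $\ov{\cL}\subseteq(E_+)^\wedge$ immediately.

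For the reverse inclusion I would argue by contradiction using a separation theorem. First note that $\cL$ is a convex cone (representing measures add and scale by positive reals, and the zero measure represents the zero functional), hence $\ov{\cL}$ is a closed convex cone in the finite-dimensional space $E^*$. Suppose some $L\in(E_+)^\wedge$ does not lie in $\ov{\cL}$. Since $\ov{\cL}$ is closed and convex and $L\notin\ov{\cL}$, the Hahn--Banach separation theorem provides a continuous linear functional on $E^*$, which—because $E$ is finite-dimensional, so $E^{**}=E$—is given by an element $f\in E$ acting via $M\mapsto M(f)$, together with a real $c$ satisfying $M(f)\geq c>L(f)$ for all $M\in\ov{\cL}$. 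Using that $\ov{\cL}$ is a cone containing $0$, I would upgrade this to the homogeneous statement $M(f)\geq 0$ for all $M\in\ov{\cL}$ while $L(f)<0$: testing against $M=0$ gives $c\leq 0$, whence $L(f)<0$; and if $M(f)<0$ held for some $M\in\ov{\cL}$, then $\lambda M\in\ov{\cL}$ with $\lambda M(f)\to-\infty$ as $\lambda\to+\infty$ would contradict $\lambda M(f)\geq c$.

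The crux is then to identify $f$ as a nonnegative function. For every point $x\in\cX$ the Dirac measure $\delta_x$ is a $1$-atomic Radon measure representing the point evaluation $l_x$ (and every $f\in E$ is trivially $\delta_x$-integrable), so $l_x\in\cL\subseteq\ov{\cL}$. Applying $M(f)\geq 0$ to $M=l_x$ yields $f(x)=l_x(f)\geq 0$ for all $x\in\cX$, that is, $f\in E_+$. But then $L\in(E_+)^\wedge$ forces $L(f)\geq 0$, contradicting $L(f)<0$. Hence $(E_+)^\wedge\subseteq\ov{\cL}$, and combining the two inclusions completes the proof.

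I expect the main obstacle to be purely the bookkeeping in the separation step: fixing the correct direction of the inequality for a \emph{cone} (rather than a general convex set) and invoking $E^{**}=E$ to read the separating functional as a genuine element $f$ of $E$. Once $f\in E_+$ is established, the contradiction is instantaneous, and the only substantive geometric input is that every point evaluation $l_x$ is itself a moment functional.
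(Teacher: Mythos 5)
Your proposal is correct and follows essentially the same route as the paper: the easy inclusion via closedness of $(E_+)^\wedge$, then separation of a point $L\notin\ov{\cL}$ from the closed convex cone $\ov{\cL}$, identification of the separating functional with some $f\in E$ via finite-dimensionality, and the use of point evaluations $l_x\in\cL$ to conclude $f\in E_+$ and reach a contradiction. The only difference is cosmetic: the paper invokes the separation theorem for cones directly in its homogeneous form, whereas you derive the homogeneous inequality from the affine one by testing against $0$ and scaling, which is a fine (and slightly more self-contained) way to fill in the same step.
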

\begin{proof}
Clearly, if $L\in {\cL}$ and $p\in E_+$, then $L(p)\geq 0$. Thus, ${\cL}\subseteq {(E_+)}^{\wedge}$. Therefore, since  $(E_+)^\wedge$ is obviously closed, $\ov{\cL}\subseteq (E_+)^{\wedge}$. 
 
Now we prove the converse inclusion $(E_+)^{\wedge} \subseteq {\ov{\cL}}$\, .
Assume to the contrary that there exists a functional $L_0\in (E_+)^\wedge$ such that $L_0\notin {\ov{\cL}}$\,. Then, by the separation theorem for convex sets  applied to the closed cone ${\ov{\cL}}$\, in $E^*$, there is a linear functional
 $F$   on $E^*$ such that $F(L_0)<0$ and $F(L)\geq 0$ for  $L\in{\cL}$. Since $E$ is finite-dimensional, there is a (unique) element $f\in E$ such that $F(L)=L(f)$ for all $L\in E^*$. Let $x\in \cX$. Then the point evaluation $l_x$ at $x$ is  in ${\cL}$, so that  $F(l_x)=l_x(f)=f(x)\geq 0$. Hence $f\in E_+$. Therefore, since $L_0\in (E_+)^\wedge$, we get $F(L_0)=L_0(f)\geq 0$ which is a contradiction. Thus $(E_+)^\wedge \subseteq {\ov{\cL}}$\,.
\end{proof}

The next proposition  is of similar spirit as a result  proved in \cite{fialkowsur}.

\begin{prop}\label{existencemfstrict}
Each strictly $E_+$-positive linear functional  on $E$ is a moment functional.
\end{prop}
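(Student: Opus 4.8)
The plan is to reduce the statement to a purely convex-geometric fact about the moment cone, using the two preceding lemmas. If $L$ is strictly $E_+$-positive, then Lemma \ref{strictypositivef}~(iii) says that $L$ is an interior point of $(E_+)^\wedge$, while Lemma \ref{e+wedgel} identifies $(E_+)^\wedge$ with $\ov{\cL}$. Hence $L$ is an interior point of $\ov{\cL}$, and the proposition amounts to the assertion that every interior point of $\ov{\cL}$ already belongs to $\cL$ itself, i.e.\ is a genuine moment functional.

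To exploit this I would first record that $\cL$ is a \emph{convex cone}: if $L_1,L_2\in\cL$ have representing measures $\mu_1,\mu_2$ and $a,b\geq 0$, then $a\mu_1+b\mu_2$ represents $aL_1+bL_2$ (and the functions of $E$ remain integrable with respect to it), so $aL_1+bL_2\in\cL$. The key input is then the standard fact from finite-dimensional convex analysis that a convex set and its closure have the same relative interior. Since $L$ lies in the \emph{topological} interior of $\ov{\cL}$, the set $\ov{\cL}$ is full-dimensional in $E^*$, so its affine hull is all of $E^*$; the same holds for $\cL$ because affine hulls are closed. Consequently relative interior and interior coincide for both sets, and one obtains $L\in\mathrm{int}\,\ov{\cL}=\mathrm{ri}\,\ov{\cL}=\mathrm{ri}\,\cL=\mathrm{int}\,\cL\subseteq\cL$, which finishes the proof.

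If one prefers to avoid the relative-interior machinery, the same conclusion can be reached by hand: since $L$ is an interior point of $\ov{\cL}$, one can choose affinely independent points $b_0,\dots,b_d\in\ov{\cL}$, with $d=\dim E^*$, so close to $L$ that $L$ lies in the interior of the simplex $\conv\{b_0,\dots,b_d\}$; as $\cL$ is dense in $\ov{\cL}$, each $b_i$ may be replaced by a nearby $c_i\in\cL$ with $L$ still interior to $\conv\{c_0,\dots,c_d\}$, whence $L$ is a convex combination of the moment functionals $c_0,\dots,c_d$ and lies in $\cL$ by convexity. The only genuine obstacle is this last topological step --- passing from membership in $\ov{\cL}$ to membership in $\cL$ --- and it is precisely here that strict $E_+$-positivity, equivalently being an \emph{interior} rather than a boundary point, is used: boundary points of $\ov{\cL}$ need not be moment functionals, whereas interior points are forced into $\cL$ by the convexity and density argument above. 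Everything else (convexity of $\cL$ and the fact that the point evaluations $l_x$ lie in $\cL$) is routine.
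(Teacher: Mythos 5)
Your proof is correct and follows essentially the same route as the paper's: both pass from strict $E_+$-positivity to $L$ being an interior point of $(E_+)^\wedge=\ov{\cL}$ via Lemmas \ref{strictypositivef} and \ref{e+wedgel}, and then invoke the fact that a convex set and its closure have the same (relative) interior to conclude $L\in\cL$. The only difference is one of detail: the paper states this last convexity fact without proof, whereas you justify it (convexity of $\cL$, the affine-hull/relative-interior argument, and the alternative simplex-perturbation argument), which is a welcome but not conceptually different elaboration.
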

\begin{proof}
Let $L$ be a  strictly $E_+$-positive functional on $E$. Then $L$ is  an inner point of $(E_+)^{\wedge}$ 
by Lemma \ref{strictypositivef} and hence of ${\ov{\cL}}$\, by Lemma  \ref{e+wedgel}. Since the convex set ${\cL}$ and its closure ${\ov{\cL}}$\, have the same inner points, $L$ is also an inner point of $\cL$. In particular, $L$ belongs to ${\cL},$ that is, $L$ is a moment functional.
\end{proof}

\section{The set $\cW(L)$ of atoms}\label{setofatoms}%%%
%%%%%%%%%%%%%%%%%%%%%%%%%%%%%%%%%%%%%%%%%%%%%%%%%%%%%%%%

In this subsection, we assume that the following condition is satisfied:
\begin{equation}\label{possepration}
\text{\textit{For each $x\in \cX$ there exists a function $f_x\in E_+$ such that $f_x(x)>0.$}}
\end{equation} 
The following important concepts appeared already in \cite{matzke} and \cite{schmuedgen}.

\begin{dfn}\label{defVWK}
For a moment  functional $L$ on $E$ we define 
\begin{align} 
\cN_+(L)&=\{ f\in E_+: L(f)=0\, \},\label{defn+s}\\
\cV_+(L)&=\{ x\in \cX: f(x)=0~~{ for~all}~f\in \cN_+(L)\},\label{defv+s}\\ 
\cW(L)&=\{ x\in \cX: \mu(\{x\})>0~~{ for~ some}~\mu \in \cM_L\}.\label{defwl}
%\cK_+(L)&=\{f\in E: f(x)=0\quad { for}~~x\in \cW(L)\}.
\end{align}
\end{dfn}

Thus, $\cW(L)$ is the set of  points $x\in\cX$ which are atoms of some representing measure $\mu$ of $L$. In the important special case $E={\cA}, \cX=\R^n$, $\cN_+(L)$ consists of real polynomials and  $\cV_+(L)$ is a real algebraic set. The sets $\cV_+(L)$ and $\cW(L)$ are fundamental notions in the theory of the truncated moment problem.

\begin{lem}\label{propW_+(L}
Let $L$ be a moment functional on $E$.
\begin{itemize}
\item[\em (i)]~
$\cW(L)\subseteq \cV_+(L).$
 \item[\em (ii)]~ If $L=0$  and $\mu\in \cM_L$, then $\mu=0$.
 \item[\em (iii)]~ The set $\cW(L)$ is not empty if and only if $L \neq 0$.
\end{itemize}
\end{lem}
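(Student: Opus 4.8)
The plan is to prove the three assertions in order, using the definitions in Definition \ref{defVWK} together with Lemma \ref{zerosupp}. For part (i), I would take a point $x_0\in\cW(L)$, so there is a representing measure $\mu\in\cM_L$ with $\mu(\{x_0\})>0$. I must show $x_0\in\cV_+(L)$, that is, $f(x_0)=0$ for every $f\in\cN_+(L)$. Fix such an $f\in E_+$ with $L(f)=0$. Since $f\geq 0$ on $\cX$ and $\int f\,d\mu=L(f)=0$, Lemma \ref{zerosupp} gives $\supp\mu\subseteq\cZ(f)$. Because $\mu(\{x_0\})>0$, the point $x_0$ lies in $\supp\mu$ (any open neighborhood $U$ of $x_0$ satisfies $\mu(U)\geq\mu(\{x_0\})>0$), hence $x_0\in\cZ(f)$, i.e.\ $f(x_0)=0$. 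As $f\in\cN_+(L)$ was arbitrary, $x_0\in\cV_+(L)$.

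For part (ii), suppose $L=0$ and $\mu\in\cM_L$. The key is to exploit condition (\ref{possepration}): for each $x\in\cX$ there is $f_x\in E_+$ with $f_x(x)>0$. Since $L=0$ and $f_x\in E$, we have $\int f_x\,d\mu=L(f_x)=0$, so by Lemma \ref{zerosupp} $\supp\mu\subseteq\cZ(f_x)$, whence $x\notin\supp\mu$ (because $f_x(x)>0$ means $x\notin\cZ(f_x)$). As $x\in\cX$ was arbitrary, $\supp\mu=\emptyset$, so $\mu=0$.

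For part (iii), I prove both directions. If $L=0$, then by (ii) the only representing measure is $\mu=0$, which has no atoms, so $\cW(L)=\emptyset$. Conversely, suppose $L\neq 0$ and let $\mu\in\cM_L$ be any representing measure. If $\cW(L)$ were empty, then $\mu$ would have no atoms; but by Corollary \ref{richtercor} (equivalently Proposition \ref{hrichtertheorem}) $L$ admits a finitely atomic representing measure $\nu=\sum_{j=1}^k m_j\delta_{x_j}$ with $k\leq\dim E$ and $m_j>0$. If $k=0$ then $\nu=0$, forcing $L(f)=\int f\,d\nu=0$ for all $f\in E$, i.e.\ $L=0$, a contradiction. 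Hence $k\geq 1$, and then $\nu(\{x_1\})=m_1>0$ shows $x_1\in\cW(L)$, so $\cW(L)\neq\emptyset$.

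The only genuine subtlety is part (ii), where condition (\ref{possepration}) is exactly what is needed and must be invoked; without it the zero functional could have a nonzero representing measure supported where all functions of $E$ vanish. Parts (i) and (iii) are then routine consequences of Lemma \ref{zerosupp} and the Richter--Tchakaloff theorem, respectively. I expect no serious obstacle; the main point is to apply Lemma \ref{zerosupp} correctly and to note that $\mu(\{x_0\})>0$ implies $x_0\in\supp\mu$.
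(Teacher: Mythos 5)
Your proof is correct and follows essentially the same route as the paper: part (ii) via condition (\ref{possepration}) and Lemma \ref{zerosupp}, and part (iii) via Corollary \ref{richtercor} together with (ii). The only cosmetic difference is in (i), where you pass through Lemma \ref{zerosupp} and the observation that $\mu(\{x_0\})>0$ forces $x_0\in\supp\mu$, while the paper uses the direct inequality $0=L(f)=\int f\,d\mu\geq f(x_0)\,\mu(\{x_0\})\geq 0$; these are the same idea.
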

\begin{proof}
(i): Let $x\in \cW(L)$. By (\ref{defwl}) there is a  measure $\mu\in \cM_L$  such that $\mu(\{x\})>0$. For $f\in \cN_+(L)$, we obtain
$$
0=L(f)=\int f(y)\, d\mu \geq f(x)\, \mu(\{x\})\geq 0.
$$
Since $\mu(\{x\})>0$, it follows that $f(x)=0$. Thus $x\in \cV_+(L)$.

(ii):  Let $x\in \cX$ and let $f_x\in E_+$ be the function from condition (\ref{possepration}). Since $L=0$, we have $L(f_x)=0$. Hence
${\supp}\, \mu\subseteq \cZ(f_x)$ by Lemma \ref{zerosupp}. Therefore,  $\supp\, \mu\subseteq \cap_{x\in \cX} \cZ(f_x).$ Since the latter set is empty  by (\ref{possepration}),  $\mu=0.$

(iii):  By Corollary \ref{richtercor}, $L$ has a finitely atomic  representing measure $\mu$. If $L\neq 0$, then $\mu\neq 0$, so    $\cW(L)$ is not empty. If $L=0$, then $\mu=0$ by (i), so  $\cW(L)$ is empty.
\end{proof}

A natural and important question is whether or not there is equality in Lemma \ref{propW_+(L}(i). The following examples show that this is not true in general, but it holds for the one dimensional truncated moment problem on $[0,1]$.

\begin{exa}\label{exW+neqV+} 
Let $\cX$ be the subspace of $\R^2$ consisting of the three points $ (-1,0)$, $(0,0)$, $(1,0)$ and the two lines $\{(t,1);t\in \R\}$,  $\{(t,-1);t\in \R\}.$ Let $E$ be the restriction to $\cX$ of the polynomials $\R[x_1,x_2]_2$ of degree at most $2$. We easily verify that the restriction map $f\mapsto f\lceil \cX$ on $\R[x_1,x_2]_2$ is injective; for simplicity we  write   $f$ instead of $f\lceil \cX$ for  $f\in \R[x_1,x_2]_2.$

We consider the moment functional $L$ defined by
\begin{equation}\label{defiLE}
L(f)=f(-1,0)+f(1,0), f\in E.
\end{equation}

We show that $\cN_+(L)= \{x_2(b x_2+ c): |c|\leq b,\, b,c\in \R \}.$
It is obvious that these polynomials are in $  \cN_+(L)$. Conversely, let $f\in \cN_+(L)$. Then $f(-1,0)=f(1,0)=0$, so that $f=x_2(ax_1+bx_2+c)+d(1-x_1^2)$, with $a,b,c,d\in \R$. Further, $d=f(0,0)\geq 0.$ From $f(t,\pm 1)\geq 0$ for all $t\in\R$ we conclude that $d=0$ and $|c|\leq b$.
 
The zero set $\cV_+(L)$ of $\cN_+(L)$ is the intersection of $\cX$ with the $x_1$-axis, that is, $\cV_+(L)=\{ (-1,0), (0,0),(1,0)\}$. 
Let $\mu$ be an arbitrary representing measure of $L.$  Then, since $\mu$ is supporting on $\cV_+(L)$,  there are  numbers\,  $\alpha,\beta,\gamma\geq 0$ such that\, $\mu=\alpha \delta_{(-1,0)}+\beta \delta_{(0,0)}+\gamma \delta_{(1,0)}$.  By (\ref{defiLE}), we have $L(x_1)=0=\int x_1~ d\mu=-\alpha+ \gamma$ and $L(x_1^2)=2=\int x_1^2 ~d\mu= \alpha+\gamma,$ which implies that $\alpha=\gamma=1$. Therefore, since $L(1)=2=\int 1~ d\mu= \alpha+\beta+\gamma$, it follows that $\beta=0$. Hence,  $\mu(\{(0,0)\})=0,$ so that   $(0,0)\notin \cW(L)$. Thus, $\cW(L)\neq \cV_+(L)$.  

The preceding proof shows that $L$ has a  unique representing measure.
$ \hfill \circ$
\end{exa}

\begin{exa}
Let  ${\sA}:=\{ 1,x,\dots,x^m\},$ and  $\cX:=[0,1]$. Then we have $\cW(L)=\cV_+(L)$ for each moment functional on $E$. Indeed, if the corresponding moment sequence $s$ is an inner point of the moment cone, then $\cN_+(L)=\{0\}$  and each point of  $[0,1]$ is an atom of a representing measure \cite[Corollary II.3.2]{karlin}. If $s$ is a boundary point of the moment cone, then $s$ has a unique representing measure $\mu$ \cite[Theorem II.2.1]{karlin}. In the first case $\cV_+(L)=\cW(L)=[0,1]$, while  $\cV_+(L)=\cW(L)={\rm supp}\, \mu$ in the second case. $\hfill \circ$
\end{exa}

\begin{lem}\label{propW}
Suppose that $L$ is a  moment functional  on $E$. 
\begin{itemize}
\item[\em (i)] If $\mu\in \cM_L$ and $M\subseteq \cX$ be a Borel  set  containing  $\cW(L)$, then $\mu(\cX\backslash M)=0$. 

\item[\em (ii)] If $\cW(L)$ is finite, there exists a  $\mu\in \cM_L$ such that ${\supp} \mu=\cW(L)$.

\item[\em (iii)] If $\cW(L)$ is infinite, then for any $n\in \N$ there exists a measure $\mu\in \cM_L$ such that $|{\supp} \mu|\geq n$.
\end{itemize}
\end{lem}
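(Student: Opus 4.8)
The plan is to prove the three assertions in the order (i), (ii), (iii), since the later parts rely on the measure-theoretic content of (i). For part (i), I would argue by contradiction: suppose $\mu(\cX\setminus M)>0$ for some $\mu\in\cM_L$ and some Borel set $M\supseteq\cW(L)$. The idea is to extract from the restriction $\mu\lceil(\cX\setminus M)$ a new nonzero representing-type measure that is forced to have an atom lying \emph{outside} $\cW(L)$, contradicting the definition \eqref{defwl} of $\cW(L)$. Concretely, I would consider the functional $L'$ on $E$ given by $L'(f)=\int_{\cX\setminus M}f\,d\mu$. This $L'$ is again a moment functional (its representing measure is $\mu\lceil(\cX\setminus M)$), and it is nonzero because its representing measure has positive total mass and condition \eqref{possepration} prevents a nonzero measure from annihilating all of $E$ (the reasoning of Lemma \ref{propW_+(L}(ii)). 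By Corollary \ref{richtercor}, $L'$ has a finitely atomic representing measure, whose atoms all lie in $\cX\setminus M\subseteq\cX\setminus\cW(L)$. But any atom $x$ of a representing measure of $L'$ ought to be an atom of a representing measure of $L$ itself—formed by gluing together a representing measure of $L-L'$ with one of $L'$—so $x\in\cW(L)$, a contradiction. The main obstacle here is the gluing step: I must verify that $L-L'$ is itself a moment functional (it is represented by $\mu\lceil M$) so that the sum of representing measures of $L-L'$ and $L'$ represents $L$ and exhibits $x$ as an atom; this is where the additive structure of \eqref{defwl} must be handled carefully.

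For part (ii), suppose $\cW(L)=\{x_1,\dots,x_k\}$ is finite. For each $j$ choose, by definition of $\cW(L)$, a representing measure $\mu_j\in\cM_L$ with $\mu_j(\{x_j\})>0$. The plan is to average: set $\mu:=\frac{1}{k}\sum_{j=1}^k\mu_j$. Then $\mu\in\cM_L$ since $\cM_L$ is convex, and $\mu(\{x_j\})\geq\frac{1}{k}\mu_j(\{x_j\})>0$ for every $j$, so $\{x_1,\dots,x_k\}\subseteq\supp\mu$. On the other hand, part (i) applied with the finite (hence Borel) set $M=\cW(L)$ gives $\mu(\cX\setminus\cW(L))=0$, which forces $\supp\mu\subseteq\cW(L)$. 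Combining the two inclusions yields $\supp\mu=\cW(L)$ exactly.

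For part (iii), assume $\cW(L)$ is infinite and fix $n\in\N$. Choose $n$ pairwise distinct points $x_1,\dots,x_n\in\cW(L)$, and for each of them a representing measure $\mu_j\in\cM_L$ with $\mu_j(\{x_j\})>0$. Again by convexity of $\cM_L$, the average $\mu:=\frac{1}{n}\sum_{j=1}^n\mu_j$ lies in $\cM_L$, and $\mu(\{x_j\})\geq\frac{1}{n}\mu_j(\{x_j\})>0$ for each $j$, so the $n$ distinct points $x_1,\dots,x_n$ all belong to $\supp\mu$; hence $|\supp\mu|\geq n$. I expect parts (ii) and (iii) to be essentially routine once (i) is established, with the genuine content concentrated in the contradiction argument of (i).
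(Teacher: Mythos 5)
Your overall strategy is the same as the paper's: split $L$ across $M$ and $\cX\setminus M$, represent the piece living on $\cX\setminus M$ by a finitely atomic measure, glue, and contradict the definition \eqref{defwl} of $\cW(L)$; your parts (ii) and (iii) coincide with the paper's proof (averaging the chosen measures and invoking (i) with $M=\cW(L)$). However, there is one genuine gap in your part (i): the localization of the atoms. Corollary \ref{richtercor}, applied to the moment functional $L'$ on $E$, only yields a finitely atomic representing measure with atoms \emph{somewhere in} $\cX$; it gives no control on where those atoms lie, so your assertion that they ``all lie in $\cX\setminus M$'' does not follow from what you cite. This localization is not cosmetic --- it is exactly what drives the contradiction. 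If the atomic representative of $L'$ had all its atoms in $M$ (which Corollary \ref{richtercor} permits), your gluing argument would only show that these atoms belong to $\cW(L)\subseteq M$, which contradicts nothing: a nonzero moment functional such as $L'$ may perfectly well admit representing measures supported inside $M$.

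The repair is precisely the paper's move: apply the Richter--Tchakaloff theorem (Proposition \ref{hrichtertheorem}) not to $L'$ as a functional on $E$ over $\cX$, but to the measure space $\cX\setminus M$ equipped with the measure induced by $\mu$, taking $V=E\lceil(\cX\setminus M)$. Richter's theorem then produces a finitely atomic measure \emph{on that measure space}, i.e.\ with atoms in $\cX\setminus M$, whose integrals agree with those of $L'$; gluing it with a representative of $L-L'$ yields a measure in $\cM_L$ having an atom $x_0\in\cX\setminus M$, whence $x_0\in\cW(L)\subseteq M$, the desired contradiction. (Note also that the paper applies Proposition \ref{hrichtertheorem} to \emph{both} pieces $L_1$ and $L_2$, which has the side benefit of avoiding any need to check that the restriction $\mu\lceil M$ is itself a Radon measure on $\cX$, a point your gluing step quietly assumes.) With this single correction, your argument --- including the use of Lemma \ref{propW_+(L}(ii) to conclude $L'\neq 0$ --- is the paper's proof.
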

\begin{proof} 
The proofs of all three assertions use Proposition  \ref{hrichtertheorem}.

(i): Assume to the contrary that $\mu(\cX\backslash M)>0$ and define linear functionals $L_1$ and $L_2$ on $E$ by 
\[L_1(f)=\int_M f(x)~ d\mu\quad{\rm and}\quad L_2(f)=\int_{\cX\backslash M} f(x)~ d\mu.\]
Applying Proposition  \ref{hrichtertheorem} to the  functionals $L_1$ and $L_2$ and the measure spaces $M$ and $\cX\backslash M$, respectively,  with  measures induced  from $\mu$, we conclude that $L_1$ and $L_2$ have finitely atomic representing measures $\mu_1$ and $\mu_2$ with atoms in $M$ and $\cX\backslash M$, respectively. Since $\mu \in \cM_L$, we have $L=L_1+L_2$ and hence $\tilde{\mu}:=(\mu_1+\mu_2)\in \cM_L.$ From $\mu(\cX\backslash M)>0$ and Lemma \ref{propW_+(L}(ii) it follows that $L_2\neq 0$. Hence $\mu_2\neq 0$. Therefore, if $x_0\in \cX\backslash M$ is an atom of $\mu$, then $\tilde{\mu}(\{x_0\})\geq  \mu_2(\{x_0\})>0$, so that $x_0\in \cW(L)\subseteq M$ which contradicts $x_0\in \cX\backslash M.$ 

(ii): By the definition of $\cW(L)$, for each $x\in\cW(L)$ there is a measure $\mu_x\in\cM_L$ such that $x\in\supp\mu_x$. Then
\[\mu := \frac{1}{|\cW(L)|} \sum_{x\in\cW(L)} \mu_x \in\cM_L\]
and $\cW(L)\subseteq \supp\mu$. (i) implies that $\supp\mu \subseteq \cW(L)$. Thus, ${\supp} \mu=\cW(L)$.

(iii) is proved by a similar reasoning as (ii).
\end{proof}

\begin{thm}\label{strictposLnonunique} 
Each strictly $E_+$-positive linear functional $L$ on $E$ is a moment functional such that
\[\cW(L)=\cX.\]
\end{thm}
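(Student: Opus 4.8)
The plan is to prove the two claims separately. That $L$ is a moment functional is immediate from Proposition~\ref{existencemfstrict}, which states precisely that every strictly $E_+$-positive functional is a moment functional, so no further work is needed there. The real content is to show $\cW(L)=\cX$. By Lemma~\ref{propW_+(L}(i) we always have $\cW(L)\subseteq\cV_+(L)\subseteq\cX$, so it suffices to fix an arbitrary point $x_0\in\cX$ and exhibit a representing measure $\mu\in\cM_L$ with $\mu(\{x_0\})>0$, i.e.\ with $x_0$ as an atom.

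The key idea is to peel off a small point mass at $x_0$ and show the remainder is still a moment functional. Concretely, for a scalar $t>0$ consider the functional $L_t:=L-t\,l_{x_0}$, where $l_{x_0}$ is the point evaluation at $x_0$. If I can find some $t>0$ for which $L_t$ is itself $E_+$-positive (equivalently, lies in $(E_+)^\wedge$), then by Lemma~\ref{e+wedgel} $L_t\in\ov{\cL}$, and in fact I want $L_t\in\cL$, i.e.\ an honest moment functional with representing measure $\nu$. Then $\mu:=\nu+t\,\delta_{x_0}$ represents $L$ and satisfies $\mu(\{x_0\})\geq t>0$, giving $x_0\in\cW(L)$.

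To carry this out I would use the quantitative characterization in Lemma~\ref{strictypositivef}. Since $L$ is strictly $E_+$-positive, part~(ii) gives a constant $c>0$ with $L(f)\geq c\|f\|$ for all $f\in E_+$, where $\|\cdot\|$ is a fixed norm on $E$. The point evaluation $l_{x_0}$ is bounded on the finite-dimensional normed space $E$, so there is $C>0$ with $l_{x_0}(f)=f(x_0)\leq\|f\|_\infty\leq C\|f\|$ for all $f$. Choosing $t:=c/(2C)>0$, I get for every $f\in E_+$ that $L_t(f)=L(f)-t\,f(x_0)\geq c\|f\|-tC\|f\|=\tfrac{c}{2}\|f\|\geq 0$, so $L_t$ is $E_+$-positive; indeed the same computation shows $L_t$ is strictly $E_+$-positive. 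Applying Proposition~\ref{existencemfstrict} again, $L_t$ is a moment functional with some representing measure $\nu$, and the measure $\mu:=\nu+t\,\delta_{x_0}$ does the job.

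The main obstacle to watch is the passage from ``$L_t\in(E_+)^\wedge$'' to ``$L_t$ is an actual moment functional'': membership in the \emph{closure} $\ov{\cL}$ is not by itself enough, since a boundary functional of $\cL$ need not lie in $\cL$. This is exactly why I route the argument through strict positivity of $L_t$ and invoke Proposition~\ref{existencemfstrict} rather than merely Lemma~\ref{e+wedgel}; the factor $\tfrac{1}{2}$ in the choice of $t$ ensures a \emph{strict} lower bound $\tfrac{c}{2}\|f\|$, keeping $L_t$ in the interior. One should also handle the degenerate case $E_+=\{0\}$, where every functional is strictly $E_+$-positive and the estimates are vacuous; there a direct appeal to Corollary~\ref{richtercor} together with the freedom to place atoms anywhere (or the remark that condition~\eqref{possepration} can only hold if $E_+\neq\{0\}$, so this case does not actually arise under the standing assumption of this section) settles things cleanly.
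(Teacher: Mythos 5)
Your proof is correct and is essentially the paper's own argument: subtract a small multiple of the point evaluation $l_{x_0}$, use the quantitative bound $L(f)\geq c\|f\|$ from Lemma~\ref{strictypositivef}(ii) together with boundedness of $l_{x_0}$ to show $L-t\,l_{x_0}$ is still \emph{strictly} $E_+$-positive, invoke Proposition~\ref{existencemfstrict} to get a representing measure $\nu$, and add back $t\,\delta_{x_0}$. The only cosmetic remark is that the intermediate bound $f(x_0)\leq\|f\|_\infty$ is unnecessary (and $\|f\|_\infty$ could be infinite for noncompact $\cX$); boundedness of $l_{x_0}$ follows directly from finite-dimensionality of $E$, exactly as in the paper.
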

\begin{proof}
That $L$ is a moment functional follows  from Proposition \ref{existencemfstrict}.

We fix a norm  $\|\cdot\|$  on $E$. Let $c$ be the corresponding positive number appearing in the inequality (\ref{normLmuesti}) of Lemma \ref{strictypositivef}. 
Suppose that $x\in \cX$. Since the point evaluation  $l_x$ at $x$ is continuous, there is $C_x>0$ such that $|l_x(f)|=|f(x)|\leq C_x\|f\|$ for $f\in E.$ Fix  $\varepsilon$ such that\, $0<\varepsilon C_x<c$. Let $f\in E_+, f\neq 0$. Using (\ref{normLmuesti})  we derive 
\[(L-\varepsilon l_x)( f)\geq c \|f\|- \varepsilon f(x)\geq (c-\varepsilon C_x ) \|f\|> 0.\]
Therefore, by Lemma \ref{strictypositivef},  $L- \varepsilon l_x$\,  is also strictly $E_+$-positive and hence a moment functional by Proposition \ref{existencemfstrict}. 
If $\nu$ is  a representing measure of $L- \varepsilon l_x$, then $\mu:=\nu+ \varepsilon \delta_x$ is a  representing measure of $L$ and $\mu(\{x\})\geq \varepsilon>0.$ Thus, $x\in \cW(L)$.
\end{proof}

\begin{cor}\label{atomcV}
Let $L$ be a moment functional on $E$. Suppose that there exist  a closed subset $\cU$ of $\cX$ and a measure $\mu\in \cM_L$ such that ${\supp}\, \mu \subseteq \cU$  and  the following holds:
If  $f(x)\geq 0$ on $\cU$ and  $L(f)=0$ for some $f\in E$, then $f=0$ on $\cU$.
Then each $x\in \cU$ is atom of some finitely atomic representing measure  of $L$.
\end{cor}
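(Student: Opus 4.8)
The plan is to reduce the statement to Theorem~\ref{strictposLnonunique} by passing from $\cX$ to the closed subset $\cU$. Since $\cU$ is closed in the locally compact Hausdorff space $\cX$, it is itself locally compact Hausdorff, and $E_\cU:=\{f\lceil\cU:f\in E\}$ is a finite-dimensional space of continuous functions on $\cU$. First I would check that $L$ descends to a well-defined functional $\tilde L$ on $E_\cU$ via $\tilde L(f\lceil\cU):=L(f)$. Indeed, if $f\lceil\cU=0$, then $f$ vanishes on $\supp\mu\subseteq\cU$, whence $L(f)=\int_\cX f\,d\mu=0$; so the value $\tilde L(f\lceil\cU)$ does not depend on the chosen representative $f$, and $\tilde L$ is a moment functional with representing measure $\mu$ viewed on $\cU$.

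Next I would verify that the standing assumptions of Section~\ref{setofatoms} hold for the pair $(\cU,E_\cU)$ and that $\tilde L$ is strictly $(E_\cU)_+$-positive. Condition~(\ref{possepration}) is inherited for free: for $x\in\cU\subseteq\cX$ the function $f_x\in E_+$ provided by~(\ref{possepration}) on $\cX$ satisfies $f_x\geq0$ on $\cU$ and $f_x(x)>0$, so its restriction lies in $(E_\cU)_+$ and is positive at $x$. For strict positivity, take $g=f\lceil\cU\in(E_\cU)_+$ with $g\neq0$; then $f\geq0$ on $\cU$ but $f$ does not vanish identically on $\cU$. Because $\supp\mu\subseteq\cU$, we have $\tilde L(g)=L(f)=\int_\cU f\,d\mu\geq0$, and the contrapositive of the hypothesis forces $L(f)\neq0$. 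Hence $\tilde L(g)>0$, as required.

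Finally I would apply Theorem~\ref{strictposLnonunique} to $\tilde L$ on $E_\cU$ to conclude $\cW(\tilde L)=\cU$, i.e.\ every $x\in\cU$ is an atom of a representing measure of $\tilde L$. Tracking the construction in that proof, the relevant measure is $\nu+\varepsilon\delta_x$, where $\nu$ represents the strictly positive functional $\tilde L-\varepsilon l_x$; choosing $\nu$ finitely atomic via Corollary~\ref{richtercor} makes it finitely atomic with $x$ as an atom. A representing measure of $\tilde L$ on $\cU$, regarded as a Radon measure on $\cX$ supported in $\cU$, is a representing measure of $L$ with the same atoms, since $\int_\cX f\,d\nu=\int_\cU(f\lceil\cU)\,d\nu=\tilde L(f\lceil\cU)=L(f)$ for $f\in E$. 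This yields, for each $x\in\cU$, a finitely atomic representing measure of $L$ having $x$ as an atom. The main point to get right is the strict-positivity step: the hypothesis alone only gives $L(f)\neq0$, and one must combine it with the sign information $L(f)=\int_\cU f\,d\mu\geq0$ coming from $\supp\mu\subseteq\cU$ to upgrade this to $L(f)>0$; the inheritance of~(\ref{possepration}) and the measure-transfer back to $\cX$ are then routine.
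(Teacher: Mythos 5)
Your proposal is correct and takes essentially the same route as the paper's own proof: restrict $L$ to $\tilde E:=E\lceil\cU$ (well defined because $\supp\mu\subseteq\cU$), deduce strict $(\tilde E)_+$-positivity of $\tilde L$ by combining the hypothesis with the nonnegativity $L(f)=\int_\cU f\,d\mu\geq 0$, apply Theorem~\ref{strictposLnonunique}, and pull the representing measures back to $\cX$. If anything, you are more careful than the paper on two routine points it leaves implicit, namely the inheritance of condition~(\ref{possepration}) by $(\cU,\tilde E)$ and the fact that the finitely atomic measure from Corollary~\ref{richtercor} can be arranged to keep $x$ as an atom.
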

\begin{proof} Being a closed subset of $\cX$, $\cU$ is a locally compact Hausdorff space.
Since ${\supp}\, \mu \subseteq \cU$, there is a well-defined (!) moment functional $\tilde{L}$ on the linear subspace  $\tilde{E}:=E\lceil \cU$ of $C(\cU;\R)$ given by $\tilde{L}(f \lceil \cU)=L(f), f\in E$. In particular, $\tilde{L}$ is $(\tilde{E})_+$-positive on $\tilde{E}$. The condition on $\cU$ implies that $\tilde{L}$ is strictly positive. Hence it follows from Theorem \ref{strictposLnonunique}(ii), applied to $\tilde{L}$ and $\tilde{E}\subseteq C(\cU,\R)$, that $\cW(\tilde{L})=\cU$. Thus each $x\in \cU$ is atom of some representing measure of $\tilde{L}$ and hence of $L$. Corollary \ref{richtercor} implies that this measure can be chosen  finitely atomic.
\end{proof}

\section{Determinacy of moment functionals}\label{Edeterminacy}%%%
%%%%%%%%%%%%%%%%%%%%%%%%%%%%%%%%%%%%%%%%%%%%%%%%%%%%%%%%%%%%%%%%%%

\begin{dfn}
A moment functional $L$ on $E$ is called \emph{determinate} if it has a unique representing measure, or equivalently, if the set $\cM_L$ is a singleton.
\end{dfn}

The following theorem  is the main result of this section. It characterizes  determinacy in terms of the size of the set $\cW(L)$.

For $x\in \cX$ we define 
\begin{equation}\label{momentvector}
s_{\sF}(x):=(f_1(x),\dots,f_m(x))^T\in \R^m.
\end{equation} 
Clearly, $s_{\sF}$ the moment vector of the delta measure $\delta_x$.

\begin{thm}\label{dettheorem}
For each moment functional $L$ on $E$ the following are equivalent:
\begin{itemize}
\item[\em (i)]   $L$ is not determinate.

\item[\em (ii)] The set $\{s_{\sF}(x): x\in \cW(L)\}$ is linearly  dependent in $\R^m$.

\item[\em (iii)]  $|\cW(L)|> \dim (E\lceil \cW(L)).$

\item[\em (iv)]  $L$ has a representing measure $\mu$  such that $|{\supp} \, \mu|> {\dim} (E\lceil \cW(L)).$
\end{itemize}
\end{thm}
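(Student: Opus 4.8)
The plan is to show that all four conditions are equivalent to the single numerical inequality (iii), using as the central bookkeeping identity the fact that $\dim(E\lceil\cW(L))$ equals the rank $r$ of the family $(s_{\sF}(x))_{x\in\cW(L)}$ in $\R^m$. Indeed, writing $f=\sum_j c_j f_j\in E$, the restriction $f\lceil\cW(L)$ vanishes exactly when the coordinate vector $c$ is orthogonal to every $s_{\sF}(x)$, $x\in\cW(L)$; hence the kernel of the restriction map $E\to E\lceil\cW(L)$ has dimension $m-r$ and $\dim(E\lceil\cW(L))=r$. Since $r\le m$ and $r\le|\cW(L)|$ always hold, this already yields (ii)$\Leftrightarrow$(iii), reading the set in (ii) as the family $(s_{\sF}(x))_{x\in\cW(L)}$: if $\cW(L)$ is infinite both conditions hold trivially ($r\le m<\infty=|\cW(L)|$, and infinitely many vectors in $\R^m$ are dependent), while for finite $\cW(L)$ a family of $|\cW(L)|$ vectors is linearly dependent precisely when its rank $r$ is strictly smaller, i.e.\ $|\cW(L)|>r=\dim(E\lceil\cW(L))$.

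Next I would dispose of the ``small'' case $\neg$(iii), that is, $|\cW(L)|\le\dim(E\lceil\cW(L))$. Combined with $r\le|\cW(L)|$ this forces $|\cW(L)|=r$; as $r\le m<\infty$, the set $\cW(L)=\{x_1,\dots,x_d\}$ is finite with $d=r$, and $s_{\sF}(x_1),\dots,s_{\sF}(x_d)$ are linearly independent. Being finite, $\cW(L)$ is closed, hence Borel, so Lemma \ref{propW}(i) shows that every $\mu\in\cM_L$ is concentrated on it; thus $\mu=\sum_{i=1}^d c_i\delta_{x_i}$ with $c_i\ge0$, and the representing condition reads $\sum_i c_i s_{\sF}(x_i)=s$. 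Linear independence of the $s_{\sF}(x_i)$ pins down the $c_i$ uniquely, so $L$ is determinate (this is $\neg$(i)) and moreover $|\supp\mu|\le d$ for every $\mu\in\cM_L$ (this is $\neg$(iv)). Hence (i)$\Rightarrow$(iii) and (iv)$\Rightarrow$(iii).

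For the converse directions I would assume (iii), writing $d:=\dim(E\lceil\cW(L))$ so that $|\cW(L)|>d$. For (iii)$\Rightarrow$(iv) I would invoke Lemma \ref{propW}: if $\cW(L)$ is finite, part (ii) of that lemma gives $\mu\in\cM_L$ with $\supp\mu=\cW(L)$, whence $|\supp\mu|=|\cW(L)|>d$; if $\cW(L)$ is infinite, part (iii) gives $\mu\in\cM_L$ with $|\supp\mu|\ge d+1$. The substantive step is (iii)$\Rightarrow$(i). I would pick $d+1$ distinct points $y_0,\dots,y_d\in\cW(L)$, choose for each $y_i$ a measure $\mu_i\in\cM_L$ with $\mu_i(\{y_i\})>0$ (possible by the definition of $\cW(L)$), and set $\mu:=\frac{1}{d+1}\sum_{i=0}^d\mu_i$. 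By convexity of $\cM_L$ we have $\mu\in\cM_L$, and $b_i:=\mu(\{y_i\})>0$ for each $i$. The $d+1$ vectors $s_{\sF}(y_0),\dots,s_{\sF}(y_d)$ lie in the $d$-dimensional span of $\{s_{\sF}(x):x\in\cW(L)\}$, hence are linearly dependent: $\sum_{i=0}^d a_i s_{\sF}(y_i)=0$ for some $a\neq0$. Writing $\mu=\rho+\sum_{i=0}^d b_i\delta_{y_i}$ with $\rho\ge0$ and $\rho(\{y_i\})=0$, the measures $\mu_t:=\rho+\sum_{i=0}^d(b_i+t a_i)\delta_{y_i}$ are nonnegative for $|t|$ small (since $b_i>0$), differ from $\mu$ for $t\neq0$, and have the same $E$-moments as $\mu$ because $\sum_i a_i f(y_i)=\sum_j c_j(\sum_i a_i f_j(y_i))=0$ for every $f=\sum_j c_j f_j\in E$, the inner sum being the $j$-th component of $\sum_i a_i s_{\sF}(y_i)=0$. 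Thus $\cM_L$ is not a singleton and $L$ is not determinate.

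Collecting the implications gives (i)$\Leftrightarrow$(iii), (ii)$\Leftrightarrow$(iii) and (iii)$\Leftrightarrow$(iv), which proves the theorem. The main obstacle is the perturbation step (iii)$\Rightarrow$(i): one must manufacture, inside a \emph{single} representing measure, $d+1$ genuine atoms whose moment vectors are linearly dependent (achieved here by averaging the per-point witnesses coming from the definition of $\cW(L)$) and then verify that the sign-corrected measures $\mu_t$ stay nonnegative and remain distinct from $\mu$. The supporting measure-theoretic inputs — that representing measures concentrate on $\cW(L)$ and that $\cW(L)$ is realized as a support — are precisely Lemma \ref{propW}, which I treat as given.
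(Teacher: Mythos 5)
Your proof is correct and follows essentially the same route as the paper's: the same ingredients appear in both (Lemma \ref{propW} for concentration and support realization, the rank identity $\dim(E\lceil \cW(L))=\mathrm{rank}\,\{s_{\sF}(x):x\in\cW(L)\}$, and the perturbation of an averaged representing measure by a signed atomic measure), merely reorganized as implications to and from (iii) instead of the paper's cycle (i)$\Rightarrow$(iii)$\Rightarrow$(ii)$\Rightarrow$(i). If anything, you are slightly more careful than the paper on two minor points: you make the rank identity explicit (the paper uses it implicitly in (iii)$\Rightarrow$(ii) and in constructing the dual functions $f_j(x_k)=\delta_{jk}$), and your smallness condition on the perturbation parameter correctly accounts for the size of the coefficients $a_i$, whereas the paper's choice $\varepsilon=\min_i \mu(\{x_i\})$ tacitly omits the factor $\max_i|c_i|$.
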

\begin{proof}
(i)$\to$(iii):
Assume to the contrary that $|\cW(L)|\leq \dim (E\lceil \cW(L))$ and let $\mu_1$ and $\mu_2$ be   representing measures of $L$. Then, since $\dim E$  is finite, so is $\cW(L)$, say $\cW(L)=\{x_1,\dots,x_n\}$ with $n\in \N$. In particular, $\cW(L)$ is a Borel set. Hence, from Lemma \ref{propW}(i), applied to $M=\cW(L),$  we deduce that ${\supp}\, \mu_i\subseteq \cW(L)$ for $i=1,2$, so there are numbers $c_{ij}\geq 0$ for $j=1,\dots,n, i=1,2$, such that 
\[L(f)=\int f(x)\,d \mu_i=\sum_{j=1}^n f(x_j)c_{ij}\quad{\rm for}~~ f\in E.\]
From the assumption $|\cW(L)|\leq \dim (E\lceil \cW(L))$ it follows that there are functions $f_j\in E$ such that $f_j(x_k)=\delta_{jk}.$ Then $L(f_j)=c_{ij}$ for $i=1,2$, so that $c_{1j}=c_{2j}$ for all $j=1,\dots,n$. Hence $\mu_1=\mu_2$, so $L$ is determinate. This contradicts (i).

(iii)$\to$(ii): Since  the cardinality of the set $\{s_{\sF}(x): x\in \cW(L)\}$ exceeds the dimension of $E\lceil \cW(L)$ by (iii), the set must
 be linearly dependent.

(ii)$\to$(i): Since the set $\{s_{\sF}(x): x\in \cW(L)\}$  is linearly dependent, there are pairwise distinct  points $x_1,...,x_k\in \cW(L)$ and real numbers  $c_1,...,c_k$, not all zero, such that $\sum_{i=1}^k c_i s_{\sF}(x_i) = 0.$ Then, since $\{f_1,\dots,f_m\}$ is a basis of $E$, we have
\begin{equation}\label{representLfxi}
\sum_{i=1}^k c_i f(x_i) = 0\quad {\rm for}~~~f\in E.
\end{equation}
We choose for  $x_i\in \cW(L)$ a representing measure $\mu_i$ of $s$ such that $x_i\in\supp\mu_i$. Clearly, $\mu := \frac{1}{k}\sum_{i=1}^k \mu_i$ is a representing measure of $s$ such that $\mu(\{x_i\}) >0$ for all $i$. Let $\varepsilon=\min\ \{\mu(\{x_i\}):i=1,\dots,k\}$.  For each number $c\in (-\varepsilon,\varepsilon)$,
\[\mu_c = \mu + c\cdot \sum_{i=1}^k c_i \delta_{x_i}\]
is a positive (!) measure  which represents $L$ by (\ref{representLfxi}). By the choice of $x_i, c_i$, the signed measure $\sum_i c_i \delta_{x_i}$ is not the zero measure. Therefore, $\mu_c\neq \mu_{c'}$ for $c\neq c'$. This shows that  $L$ is not determinate.

(iii)$\leftrightarrow$(iv): If $\cW(L)$ is finite,  by Lemma \ref{propW}(ii) we can choose $\mu\in \cM_L$ such that ${\supp}\, \mu=\cW(L)$. If  $\cW(L)$ is infinite, Lemma \ref{propW}(iii) implies that there exists $\mu\in \cM_L$ such that  $|{\supp} \, \mu|> {\dim} (E\lceil \cW(L)).$ Thus, the equivalence  (iii)$\leftrightarrow$(iv) is proved in both cases.
\end{proof}

An immediate consequence of Theorem \ref{dettheorem} is the following.

\begin{cor}\label{detcororollary}
If\,  $|\cW(L)|>\dim E$ or if there is a measure $\mu\in \cM_L$ such that $|\supp \, \mu|> \dim E$, then $L$ is not determinate. In particular, $L$ is  not determinate if\, $\cW(L)$ is an infinite set or if\, $L$\, has a representing measure of infinite support.
\end{cor}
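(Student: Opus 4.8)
The plan is to deduce both parts directly from Theorem \ref{dettheorem}, the only extra ingredient being one elementary dimension bound. The key observation I would record first is that the restriction map $f\mapsto f\lceil\cW(L)$ is a linear surjection of $E$ onto $E\lceil\cW(L)$, so that
\[
\dim\bigl(E\lceil\cW(L)\bigr)\le \dim E .
\]
Since $\dim E$ is finite, this single inequality is all that separates the hypotheses of the corollary from conditions (iii) and (iv) of Theorem \ref{dettheorem}.

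For the first assumption $|\cW(L)|>\dim E$, I would chain it with the displayed bound to obtain $|\cW(L)|>\dim(E\lceil\cW(L))$, which is exactly condition (iii) of Theorem \ref{dettheorem}. Non-determinacy of $L$ then follows at once.

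For the second assumption, given a representing measure $\mu\in\cM_L$ with $|\supp\mu|>\dim E$, the same bound yields $|\supp\mu|>\dim(E\lceil\cW(L))$; thus this very $\mu$ witnesses condition (iv) of Theorem \ref{dettheorem}, and again the theorem delivers non-determinacy.

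The two ``in particular'' statements I would dispose of by noting that an infinite set has cardinality exceeding the finite number $\dim E$: if $\cW(L)$ is infinite this reduces to the first case, and if some $\mu\in\cM_L$ has infinite support it reduces to the second. There is no genuine obstacle here; the only point that must be stated carefully is the surjectivity of the restriction map (hence the dimension inequality), after which the result is a pure bookkeeping consequence of Theorem \ref{dettheorem}.
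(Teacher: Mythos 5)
Your proposal is correct and follows exactly the route the paper intends: the paper states this corollary as an immediate consequence of Theorem \ref{dettheorem}, and your argument simply makes explicit the one needed observation, namely that the restriction map $E\to E\lceil\cW(L)$ is a linear surjection, so $\dim(E\lceil\cW(L))\le\dim E$, after which conditions (iii) and (iv) of the theorem apply directly.
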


\begin{cor}
Suppose that $L$ is a strictly\, $E_+$-positive moment functional on $E$. Then $L$ is determinate if and only if $|\cX|\leq \dim\, E$.
\end{cor}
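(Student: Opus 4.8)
The plan is to derive this corollary directly from the two main results already in hand: Theorem \ref{strictposLnonunique}, which pins down $\cW(L)$ for a strictly $E_+$-positive functional, and Theorem \ref{dettheorem}, which translates determinacy into a statement about the size of $\cW(L)$. Essentially all the work has been done, and the proof will be a short bookkeeping argument chaining these together.

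First I would apply Theorem \ref{strictposLnonunique} to the hypothesis that $L$ is strictly $E_+$-positive. This gives simultaneously that $L$ is a moment functional (so that the notion of determinacy even makes sense — alternatively one may cite Proposition \ref{existencemfstrict}) and, crucially, that $\cW(L)=\cX$. The next step is to observe that the restriction space occurring in Theorem \ref{dettheorem} degenerates in this situation: since $E$ is by assumption a vector space of functions defined on all of $\cX$, restriction to $\cW(L)=\cX$ does nothing, i.e.\ $E\lceil \cW(L)=E\lceil \cX=E$, and therefore $\dim\,(E\lceil \cW(L))=\dim E$.

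Finally I would invoke the equivalence (i)$\leftrightarrow$(iii) of Theorem \ref{dettheorem}, which asserts that $L$ fails to be determinate precisely when $|\cW(L)|>\dim\,(E\lceil \cW(L))$. Passing to the contrapositive, $L$ is determinate if and only if $|\cW(L)|\leq \dim\,(E\lceil \cW(L))$; substituting $\cW(L)=\cX$ and $\dim\,(E\lceil \cW(L))=\dim E$ yields exactly the claimed equivalence $L$ determinate $\Leftrightarrow$ $|\cX|\leq \dim E$. There is no genuine obstacle here: the only point demanding a moment of attention is that Theorem \ref{dettheorem} is stated with $\dim\,(E\lceil \cW(L))$ rather than $\dim E$, so I must make explicit that these two dimensions coincide under the present hypothesis — which they do precisely because strict $E_+$-positivity forces $\cW(L)$ to exhaust $\cX$.
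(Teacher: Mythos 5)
Your proof is correct and is essentially identical to the paper's own argument: both apply Theorem \ref{strictposLnonunique} to get $\cW(L)=\cX$, note that consequently $\dim\,(E\lceil\cW(L))=\dim E$, and conclude via the equivalence (i)$\leftrightarrow$(iii) of Theorem \ref{dettheorem}. Nothing is missing.
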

\begin{proof}
From Theorem \ref{strictposLnonunique} we obtain $\cX=\cW(L)$. Therefore, $\dim E=\dim\, (E\lceil\cW(L))$. Hence the assertion follows from Theorem \ref{dettheorem},(iii)$\leftrightarrow$(i).
\end{proof}

The following simple results contain useful sufficient criteria for determinacy.

\begin{prop}\label{lem:separation1}
Let $s\in\cS$. Suppose that  $\cW(s)= \{x_1,...,x_k\}$, where $k\in \nset$. If for each $j=1,...,k$ there exists $p_j\in E_+$ such that $p_j(x_i) = \delta_{i,j}$, then $s$ is determinate.
\end{prop}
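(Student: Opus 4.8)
The plan is to show that every representing measure of $L:=L_s$ is one and the same finitely atomic measure, whose weights are read off from $L$ by means of the separating functions $p_j$.

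First I would use that $\cW(s)=\{x_1,\dots,x_k\}$ is finite, hence a Borel subset of $\cX$. Applying Lemma \ref{propW}(i) with $M=\cW(s)$ to an arbitrary $\mu\in\cM_L$ yields $\mu(\cX\setminus\cW(s))=0$, so that $\supp\mu\subseteq\cW(s)$ and therefore $\mu=\sum_{i=1}^k m_i\,\delta_{x_i}$ with weights $m_i=\mu(\{x_i\})\geq 0$. This reduction to a combination of point masses at the $x_i$ is the one genuinely load-bearing step; everything after it is a one-line computation.

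Next, for each fixed $j$ I would evaluate $L$ on the separating function $p_j\in E_+\subseteq E$. Since $\mu$ represents $L$ and $p_j(x_i)=\delta_{i,j}$,
\[
L(p_j)=\int_{\cX}p_j\,d\mu=\sum_{i=1}^k m_i\,p_j(x_i)=m_j .
\]
Thus $m_j=L(p_j)$ for every $j$, a quantity depending only on $L$ and not on the chosen $\mu$. Hence any representing measure must equal $\mu=\sum_{j=1}^k L(p_j)\,\delta_{x_j}$, so $\cM_L$ is a singleton and $s$ is determinate. (Observe that the nonnegativity of the $p_j$ is not actually needed for this argument; only $p_j\in E$ together with the interpolation property $p_j(x_i)=\delta_{i,j}$ is used.)

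As an alternative one could invoke Theorem \ref{dettheorem}: the existence of functions $p_j\in E$ with $p_j(x_i)=\delta_{i,j}$ says exactly that the restriction map $E\to\R^k$, $f\mapsto(f(x_1),\dots,f(x_k))$, is surjective, i.e. $\dim(E\lceil\cW(s))=k=|\cW(s)|$; the negation of condition (iii) there then forces determinacy through the equivalence (iii)$\leftrightarrow$(i). I would nevertheless present the direct argument as the main proof, since it is self-contained and explicitly exhibits the unique representing measure. There is no real obstacle here beyond correctly invoking Lemma \ref{propW}(i) to confine the support to $\cW(s)$.
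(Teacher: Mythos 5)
Your proof is correct and follows essentially the same route as the paper: confine the support of an arbitrary representing measure to $\cW(s)$ via Lemma \ref{propW}(i) (using that a finite set is Borel), then read off the weights as $c_i=L_s(p_i)$ from the interpolation property, so all representing measures coincide. Your side remarks (that nonnegativity of the $p_j$ is not needed, and the alternative via Theorem \ref{dettheorem}) are accurate but inessential additions to the paper's argument.
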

\begin{proof}
Let $\mu$ and $\nu$ be  representing measures of $s$. Then  
$\supp\, \mu\subseteq \cW(s)$ and $\supp\, \nu\subseteq \cW(s)$
by Proposition \ref{propW}(i), so   $\mu = \sum_{i=1}^{k} c_i \delta_{x_i}$ and $\nu = \sum_{i=1}^k d_i \delta_{x_i}.$ Therefore,
\[c_i = \int_\cX p_i~ d\mu = L_s(p_i) = \int_\cX p_i~ d\nu = d_i \qquad\forall i=1,...,k,\]
so that $\mu=\nu$. Hence $s$ is determinate.
\end{proof}

The next proposition contains a sufficient criterion for the existence of such polynomials $p_j$.

\begin{prop}\label{lem:separation2}
Let  $\sB = \{b_1,...,b_k\}$ be a subset of $C(\cX;\rset).$ 
We suppose that ${\sB}^2:= \{b_i b_j:i,j{=}1,\dots,k\}\subseteq E$. Let $s$ be a moment sequence of $E$ such that  $\cW(s)= \{x_1,...,x_l\}$,   $l\leq k$. If the vectors $s_\sB(x_1),...,s_\sB(x_l)\in \rset^k$ are linearly independent, then  there exist functions $p_j\in E_+$ such that $p_j(x_i)=\delta_{ij}$ for  $i,j=1,\dots,l$ and $s$ is determinate.
\end{prop}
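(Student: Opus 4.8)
The plan is to reduce the statement to Proposition \ref{lem:separation1}, which already shows that the existence of functions $p_j\in E_+$ with $p_j(x_i)=\delta_{ij}$ forces $s$ to be determinate. Thus the entire content of this proposition lies in \emph{constructing} such separating functions $p_j$ out of the hypotheses on $\sB$, after which determinacy follows for free.

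First I would encode the linear independence assumption as a rank condition. Writing $s_\sB(x)=(b_1(x),\dots,b_k(x))^T\in\rset^k$, let $M$ be the $l\times k$ matrix with entries $M_{ir}=b_r(x_i)$. Its $i$-th row is exactly $s_\sB(x_i)^T$, and by hypothesis the vectors $s_\sB(x_1),\dots,s_\sB(x_l)$ are linearly independent; hence $M$ has full row rank $l$ (here $l\leq k$ is what makes this possible). Consequently the linear map $\rset^k\to\rset^l$, $c\mapsto Mc$, is surjective. For each $j\in\{1,\dots,l\}$ I would then solve $Mc^{(j)}=e_j$, where $e_j$ is the $j$-th standard basis vector of $\rset^l$; a solution $c^{(j)}=(c^{(j)}_1,\dots,c^{(j)}_k)\in\rset^k$ exists by surjectivity. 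Setting $g_j:=\sum_{r=1}^k c^{(j)}_r\, b_r\in C(\cX;\rset)$, the equation $Mc^{(j)}=e_j$ says precisely that $g_j(x_i)=\delta_{ij}$ for all $i,j\in\{1,\dots,l\}$.

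The key step---and essentially the only nontrivial one---is to pass to the squares $p_j:=g_j^2$. Squaring achieves two things simultaneously: it makes $p_j$ manifestly nonnegative on all of $\cX$, and, since $g_j^2=\sum_{r,s} c^{(j)}_r c^{(j)}_s\, b_r b_s$ is a linear combination of the products $b_r b_s$, the assumption $\sB^2\subseteq E$ guarantees $p_j\in E$; together these give $p_j\in E_+$. Moreover $p_j(x_i)=g_j(x_i)^2=\delta_{ij}$, because $\delta_{ij}^2=\delta_{ij}$.

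Having produced functions $p_1,\dots,p_l\in E_+$ with $p_j(x_i)=\delta_{ij}$, I would invoke Proposition \ref{lem:separation1} (with $l$ in place of $k$) to conclude that $s$ is determinate. The only point requiring genuine care is verifying that each $p_j$ truly lands in $E$ rather than merely in the linear span of the $b_r$: this is exactly where the hypothesis $\sB^2\subseteq E$ is indispensable (one does not assume $\sB\subseteq E$), and it is the reason the construction must use the \emph{quadratic} functions $g_j^2$ rather than the linear functions $g_j$ themselves.
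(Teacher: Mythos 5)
Your proof is correct and follows essentially the same route as the paper: use the full row rank of the matrix $M=(s_\sB(x_i)^T)_{i=1,\dots,l}$ to build functions in $\Lin\,\sB$ with prescribed values at the $x_i$, square them (this is where $\sB^2\subseteq E$ enters) to obtain the $p_j\in E_+$ with $p_j(x_i)=\delta_{ij}$, and then invoke Proposition \ref{lem:separation1}. The only cosmetic difference is that you solve $Mc^{(j)}=e_j$ directly by surjectivity, while the paper picks $\tilde{q}_j\in\ker M_j\setminus\ker M$ and normalizes by $q_j(x_j)^{-2}$ after squaring; these are interchangeable.
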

\begin{proof}
Recall that $s_{\sB}(x)=(b_1(x),\dots,b_j(x))^T$ for $x\in \cX$. Set
\[M := (s_\sB(x_i)^T)_{i=1,...,l} \quad\text{and}\quad M_j := (s_\sB(x_i)^T)_{i=1,...,j-1,j+1,...,l} \quad\forall j=1,...,l.\]
Since $\{s_\sB(x_1),...,s_\sB(x_l)\}$ is linearly independent, we have
\[\rank M = l \quad\text{and}\quad \rank M_j = l-1 \quad\forall j=1,...,l.\]

Hence, for any $j$ there exists $\tilde{q}_j\in\ker M_j\setminus\ker M$. Then $q_j(x) := \langle \tilde{q}_j,s_\sB(x)\rangle \in {\Lin} \, {\sB}$. We have  $q_j(x_i) = 0$ for  $i\neq j$ and $q_j(x_j)\neq 0$, since otherwise $\tilde{q}_j\in\ker M$. Therefore, $p_j :=q_j(x_j)^{-2}q_j^2 \in E_+$ has the desired properties. 

The determinacy of $s$ follows from Proposition \ref{lem:separation1}.
\end{proof}

In the following example  the Robinson polynomial is used to develop an application of Proposition \ref{lem:separation2}.

\begin{exa}\label{exmp:four}
Let $\sA :=\{ (x,y,z)^\alpha : |\alpha|=6\}$ and  $\sB :=\{ (x,y,z)^\alpha : |\alpha|=3\}$. Then $\sB^2\subseteq E\equiv \cA:=\Lin \, {\sA}$. We  consider the homogeneous polynomials of $\cA$ and $\sB$ acting as continuous functions on the  projective space $\cX=\pset(\rset^2)$. Our aim it to apply Proposition \ref{lem:separation2}. Let
\begin{align*}
\Zset=\{&(1,1,1),(1,1,-1),(1,-1,1),(1,-1,-1),(1,1,0),\\
&(1,-1,0),(1,0,1),(1,0,-1),(0,1,1),(0,1,-1)\}.
\end{align*}
It is known that the  Robinson polynomial
\[R(x,y,z) = x^6 + y^6 + z^6 - x^4(y^2 + z^2) - y^4 (x^2+z^2) - z^4(x^2+y^2) + 3x^2 y^2 z^2\in \]
is non-negative on $\rset^3$,  hence $R\in E_+$,  and that $R$ has the projective zero set $\Zset=\{r_1,...,r_{10}\}$. Then 
\[M := (s_\sB(r_1),...,s_\sB(r_{10}))^T\]
is a full rank $10\times 10$-matrix and for  $i=1,...,10$ the matrix
\[M_i := (s_\sB(r_1),...,s_\sB(r_{i-1}),s_\sB(r_{i+1}),...,s_\sB(r_{10}))^T\]
has rank 9. Hence, by  Proposition \ref{lem:separation2}, there exist polynomials  $p_i\in E_+$ such that $p_i(r_j)=\delta_{i,j}$. Therefore,
\[\Zset(R+p_i) = \{r_1,...,r_{i-1},r_{i+1},...,r_{10}\}.\]
Using polynomials $R+p_{i_1}+...+p_{i_k}$ we find that $\cW_+(s) = \cV_+(s)$ for all moment sequences $s$ with representing measure $\mu$ such that\,  $\supp \mu \subseteq\Zset$.
\end{exa}

In the next example we use the Motzkin polynomial and derive the determinacy from Theorem \ref{dettheorem}.

\begin{exa}\label{exmp:Motzkin}
We consider the Motzkin polynomial
\[M(x,y) = 1 - 3x^2 y^2 + x^2 y^4 + x^4 y^2. \]
Its zero set is $\Zset(M) = \{r_1=(1,1),r_2=(1,-1,),r_3=(-1,1),r_4=(-1,-1)\}$. Let us set $\cX=\R^2$, $\sB := \{1$, $x$, $xy$, $x^3$, $xy^2\}$ and $E=\cA=\Lin\,\sA$, where
\[\sA:=\sB^2=\{ 1, x, x^2, xy, x^3, x^2y, xy^2, x^4, x^2y^2, x^4 y, x^2y^3, x^6, x^4y^2, x^2y^4\}.\]
Then $M\in E_+$. The set $\{s_\sB(r_1),...,s_\sB(r_4)\}$ is linearly dependent, since
\[0 = s_{\sB}(1,1) - s_{\sB}(1,-1) + s_{\sB}(-1,1) - s_{\sB}(-1,1).\]
Hence each  polynomial in $\Lin\, \sB$ vanishing at three roots of $M$ vanishes at the fourth as well. Proposition \ref{lem:separation2} does not  apply. But the set $\{s_\sA(r_1),...,s_\sA(r_4)\}$ is linearly independent. Therefore, the moment sequence of any   measure $\mu = \sum_{i=1}^4 c_i \delta_{r_i}$ is determinate by Theorem \ref{dettheorem},(i)$\leftrightarrow$(ii). 
\end{exa}

\section{Exposed faces of the moment cone}\label{exposedfacesmomentcone}%%%
%%%%%%%%%%%%%%%%%%%%%%%%%%%%%%%%%%%%%%%%%%%%%%%%%%%%%%%%%%%%%%%%%%%%%%%%%%%

For $v=(v_1,\dots,v_m)^T\in \R^m$ we abbreviate $$f_v=v_1f_1+\dots+v_mf_m.$$ 
Let $\langle \cdot,\cdot \rangle$ denote the standard scalar product on $\R^m$. Then
\begin{equation}\label{f_v}
f_v(x)=v_1f_1(x)+\dots+v_mf_m(x)=\langle s_\sF(x),v\rangle,\quad x\in \cX.
\end{equation}
Recall that $\sF=\{f_1,\dots,f_m\}$ is a basis of the vector space $E$. Hence $E$ is the set of all functions $f_v$, where $v\in \R^m$. By definition,
\[E_+=\{f_v: v\in \R^m, \langle s_\sF(x),v\rangle \geq 0~~{\rm{for}}~~x\in \cX\}.\]

Further,  we have 
\begin{equation}\label{Lgvt}
L_t(f_v)=\langle v,t\rangle\quad {\rm for}~~~   v,t\in \rset^m.
\end{equation} 
Indeed, since $\R^m ={\cS}-{\cS}$, each vector $t\in\rset^m$ is of the form  $t=\sum_{i=1}^k c_i s_\cF(x_i)$, where  $c_i\in \rset$ and $x_i\in \cX$. Then we compute
\begin{equation}\label{vtL0}
L_t(f_v)=\sum_{i=1}^k c_if_v(x_i)= \sum_{i=1}^k c_i \langle v,s_\sF(x_i)\rangle 
=\langle v,t\rangle
\end{equation}
which proves (\ref{Lgvt}). From (\ref{Lgvt}) it follows that for each linear functional $h$ on $E$ there is a unique vector  $u\in \R^m$ such that 
\begin{equation}\label{h_u}
 h_u(f_v):=h(f_v)=\langle v,u\rangle,\quad v\in \R^m.
\end{equation}
For  $u\in\R^m$ we  define 
\[h_{u}(t):=\langle t,u\rangle, t\in \R^m,\quad \text{and}\quad H_u: = \{x\in\R^m : \langle x,u\rangle = 0\}.\] 
For the set  $\cN_+(s)\equiv \cN_+(L_s)$ defined by (\ref{defn+s}) we have the following  crucial fact.

\begin{lem}\label{N-+(s)}
Let $u\in \R^m$ and $s\in \cS$. Then
$f_u\in \cN_+(s)$ if and only if $h_u(s)=0$ and $h_u(t)\geq 0$ for $t\in {\cS}$. 
\end{lem}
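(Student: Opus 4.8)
The plan is to unwind both sides of the claimed equivalence using the two identities (\ref{f_v}) and (\ref{Lgvt}), after which the statement reduces to two short positivity arguments. First I would record the translation of the defining conditions of $\cN_+(s)$. By (\ref{defn+s}) the membership $f_u\in\cN_+(s)$ means two things at once: that $L_s(f_u)=0$ and that $f_u\in E_+$. Applying (\ref{Lgvt}) with $v=u$ and $t=s$ gives $L_s(f_u)=\langle u,s\rangle=h_u(s)$, so the condition $L_s(f_u)=0$ is literally the condition $h_u(s)=0$. This identifies one of the two asserted conditions with no further work, and it is common to both directions of the proof.

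It then remains to show that $f_u\in E_+$ is equivalent to $h_u(t)\geq 0$ for all $t\in\cS$. For the direction $f_u\in E_+\Rightarrow h_u(t)\geq 0$, I would take an arbitrary $t\in\cS$, choose a representing measure $\mu\in M_+(\cX)$ of $L_t$, and use $h_u(t)=\langle t,u\rangle=L_t(f_u)=\int_\cX f_u\,d\mu$; since $f_u\geq 0$ on $\cX$ and $\mu$ is nonnegative, this integral is $\geq 0$. For the converse direction, the key observation is that for every $x\in\cX$ the vector $s_\sF(x)$ lies in $\cS$, being the moment vector of the Dirac measure $\delta_x$. Evaluating the hypothesis $h_u(t)\geq 0$ at $t=s_\sF(x)$ and invoking (\ref{f_v}) yields $0\leq h_u(s_\sF(x))=\langle s_\sF(x),u\rangle=f_u(x)$ for every $x\in\cX$, which is precisely $f_u\in E_+$.

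Assembling the two pieces completes the equivalence, since $f_u\in\cN_+(s)$ holds exactly when $h_u(s)=0$ and $f_u\in E_+$. I do not expect a genuine obstacle here; the whole content is a matter of matching the dual pairing $h_u$ on $\R^m$ with the functional $f_v\mapsto L_t(f_v)$ on $E$ under the basis identification of $\sF$. The only point deserving a little care is the converse direction, where one must use that each point evaluation produces a moment vector $s_\sF(x)\in\cS$, so that pointwise nonnegativity of $f_u$ can be recovered from nonnegativity of $h_u$ on the entire cone $\cS$. Geometrically, the lemma merely says that an element $f_u$ of $\cN_+(s)$ corresponds to a hyperplane $H_u$ supporting $\cS$ at the point $s$.
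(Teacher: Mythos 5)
Your proposal is correct and follows essentially the same route as the paper: both reduce membership in $\cN_+(s)$ to the two conditions $L_s(f_u)=0$ and $f_u\in E_+$ via the pairing identity (\ref{Lgvt}), and both recover pointwise nonnegativity of $f_u$ from nonnegativity of $h_u$ on $\cS$ by evaluating at the moment vectors $s_\sF(x)$ of Dirac measures. The only cosmetic difference is that the paper writes $t\in\cS$ as a finite nonnegative combination $\sum_i c_i s_\sF(x_i)$ where you integrate $f_u$ against a general representing measure; the two arguments are interchangeable.
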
 
\begin{proof}
As noted above,  $f_u\in E_+$ if and only if $f_u(x)=\langle s_\sF(x),u\rangle \geq 0$ for all $x\in \cX$. 
Let $t\in \cS$. We can write $t=\sum_i c_i s_\sF(x_i)$ with $x_i\in \cX,$  $c_i\geq 0$ for all $i$. Then
\[L_t(f_u)=\sum\nolimits_i\, c_i \langle s_\sF(x_i),u\rangle =\sum\nolimits_i\, c_i f_u(x_i) =\langle u,t\rangle =h_u(t)\]
by (\ref{h_u}). Hence $f_u\in E_+$ if and only if $h_u(t)\geq 0$ for $t\in {\cS}$. Further,  $L_s(f_u) =0$ if and only if $h_u(s)=0.$   The two latter facts give the assertion.
\end{proof}

Let us recall two basic definition from convex analysis.

\begin{dfn}\label{supportplane}
Let $u\in\R^m, u\neq 0$, and $s\in \cS$. We say that $H_{u}$  is  a \emph{supporting hyperplane} of $\cS$ at the point $s$ if
\[h_u(s)=0 \quad \text{and}~~h_u(t) \geq 0~~\text{for~ all}~~ t\in\cS.\]
The set $H_u\cap \cS$ is called a \emph{proper exposed face} of the cone $\cS$. 
\end{dfn}

Let $s\in \cS$.  Combining Lemma \ref{N-+(s)} and Definition \ref{supportplane} it follows that $H_u$ is a supporting hyperplane of $\cS$ at $s$ if and only if $f_u\in \cN_+(s)$ and $u\neq 0$. Further, $H_u\cap\cS$ is a proper exposed face of $\cS$ if and only if $f_u\in \cN_+(s)$ and $u\neq 0$. All proper exposed faces of $\cS$ are of this form. In the case $u=0$ we have $H_u\cap \cS=\cS$. Note that by definition the sets $\cS$ and $\emptyset$ are also exposed faces of $\cS$.

\begin{prop}\label{N-+(s)11}  Let $s\in \cS$. Then:
\begin{enumerate}
\item[\em (i)] $\cN_+(s)\neq \{0\}$ if and only if $s$ is a boundary point of $\cS$. 
\item[\em (ii)] $\cN_+(s)=\{0\}$ if and only if $s$ is an inner point of $\cS$.
\end{enumerate}
\end{prop}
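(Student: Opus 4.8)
The plan is to prove the two statements together, exploiting the fact that they are almost logically complementary. Since $s$ is always a point of $\cS$ and $\cS$ is a convex cone in $\R^m$, every such $s$ is either an inner point or a boundary point of $\cS$, and these alternatives are mutually exclusive. Hence statements (i) and (ii) will follow from one another once I establish either direction of one equivalence, provided I am careful about the degenerate possibility that $\cS$ has empty interior (i.e.\ does not span $\R^m$). I would first dispose of this degeneracy, or better, handle everything through the supporting-hyperplane characterization already set up in the excerpt.

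The key tool is the observation recorded just before Definition \ref{supportplane}: for $u\in\R^m$ with $u\neq 0$, the hyperplane $H_u$ is a supporting hyperplane of $\cS$ at $s$ if and only if $f_u\in\cN_+(s)$. So $\cN_+(s)\neq\{0\}$ is equivalent to the existence of a nonzero $u$ with $f_u\in\cN_+(s)$ (note $f_u=0$ forces $u=0$ since $\sF$ is a basis), which by Lemma \ref{N-+(s)} is equivalent to the existence of a nonzero $u\in\R^m$ with $h_u(s)=0$ and $h_u(t)\geq 0$ for all $t\in\cS$. This last condition is precisely the existence of a supporting hyperplane of the convex cone $\cS$ at $s$. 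So the proposition reduces to the purely convex-geometric fact: a point $s$ of a convex cone $\cS\subseteq\R^m$ with $\cS-\cS=\R^m$ admits a nonzero supporting functional at $s$ if and only if $s$ lies on the boundary of $\cS$.

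For the forward direction of (i), if $s$ is a boundary point of $\cS$, I would apply the supporting hyperplane theorem for convex sets to produce a nonzero $u$ with $\langle t,u\rangle\geq\langle s,u\rangle$ for all $t\in\cS$; using that $\cS$ is a cone (so $0\in\cS$ and $\lambda t\in\cS$ for $\lambda\geq0$) I can normalize this to $h_u(s)=0$ and $h_u(t)\geq0$ on $\cS$, giving $f_u\in\cN_+(s)$ with $u\neq0$. Here the condition $\cS-\cS=\R^m$ guarantees $\cS$ has nonempty interior, so that ``boundary point'' is meaningful and the separation argument applies. For the converse direction, if $\cN_+(s)\neq\{0\}$ then there is a nonzero $u$ with $h_u\geq0$ on $\cS$ and $h_u(s)=0$; since $h_u$ is a nonzero linear functional it is not identically zero, so it is strictly positive somewhere on $\cS$ (again using $\cS-\cS=\R^m$), whence $s$, lying on the hyperplane $H_u$ which bounds $\cS$ from one side, cannot be interior. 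The main obstacle I expect is the careful treatment of the spanning hypothesis $\cS-\cS=\R^m$: one must confirm this holds (it is established in the excerpt, where it underlies equation (\ref{Lgvt})) so that $\cS$ genuinely has interior and the interior/boundary dichotomy is nontrivial.

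Finally, (ii) follows from (i) by negation: $s$ is an inner point of $\cS$ if and only if it is not a boundary point, if and only if (by (i)) $\cN_+(s)=\{0\}$. I would state this explicitly rather than reprove it, writing something like: \emph{Assertion (ii) is the contrapositive of (i), since every point of the cone $\cS$ is either an inner point or a boundary point of $\cS$.} This keeps the argument short and makes transparent that the whole content sits in (i), which in turn is just the supporting hyperplane theorem translated through Lemma \ref{N-+(s)}.
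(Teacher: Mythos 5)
Your proposal is correct and takes essentially the same approach as the paper: both reduce the statement via Lemma \ref{N-+(s)} and the remarks following Definition \ref{supportplane} to the classical convex-analysis fact that a point of $\cS$ is a boundary point if and only if $\cS$ admits a supporting hyperplane there, and both obtain (ii) from (i) by negation. The only difference is one of detail: the paper cites this fact as well known, while you spell out its proof (the cone normalization $h_u(s)=0$ and the use of $\cS-\cS=\R^m$ for the converse).
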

\begin{proof}
(i): It is well-known from convex analysis that $s$ is a boundary point of $\cS$ if and only if there is a supporting hyperplane $H_u$ of  $\cS$ at $s$. By the preceding the latter holds if and only if $f_u\in \cN_+(s)$ and $f_u\neq 0$. 

(ii) follows from (i) and the obvious fact that $s$ is an inner point of $\cS$ if and only if $s$ is not a boundary point.
\end{proof}

\begin{prop}\label{thm:OneGenerator}
For each  moment sequence $s$  there exists  $p\in \cN_+(s)$ such that
\[\cV_+(s) = \Zset(p):=\{x\in \cX:p(x)=0\}.\]
\end{prop}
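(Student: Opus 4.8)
The plan is to exploit the finite-dimensionality of $E$ together with two elementary but decisive closure properties of the cone $\cN_+(s)$. First, $\cN_+(s)$ is closed under addition: if $f,g\in\cN_+(s)$, then $f+g\in E_+$ (since $E_+$ is a cone) and $L_s(f+g)=L_s(f)+L_s(g)=0$, so $f+g\in\cN_+(s)$. Second, because the elements of $\cN_+(s)$ are nonnegative on $\cX$, a sum of them vanishes at a point exactly when every summand does; hence for any finite family $f_1,\dots,f_r\in\cN_+(s)$ the function $p:=f_1+\dots+f_r$ lies in $\cN_+(s)$ and satisfies $\cZ(p)=\bigcap_{i=1}^r\cZ(f_i)$. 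The strategy is therefore to realize $\cV_+(s)$ as the common zero set of a \emph{finite} subfamily of $\cN_+(s)$ and then collapse that intersection into the zero set of their sum.

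To produce the finite family, I would pass to $W:=\lin\,\cN_+(s)$, the linear span of $\cN_+(s)$ inside the finite-dimensional space $E$. Since $\cN_+(s)$ is a cone spanning $W$, one can choose a basis $f_1,\dots,f_r$ of $W$ consisting of elements of $\cN_+(s)$; in the degenerate case $\cN_+(s)=\{0\}$ one has $r=0$ and simply takes $p=0$, which gives $\cZ(p)=\cX=\cV_+(s)$ (this is exactly the inner-point case of Proposition \ref{N-+(s)11}). The central claim is then
\[
\bigcap_{i=1}^r\cZ(f_i)=\cV_+(s).
\]
The inclusion $\cV_+(s)\subseteq\bigcap_i\cZ(f_i)$ is immediate, since each $f_i\in\cN_+(s)$ and $\cV_+(s)$ is by definition contained in the zero set of every element of $\cN_+(s)$. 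For the reverse inclusion, let $x\in\bigcap_i\cZ(f_i)$ and let $g\in\cN_+(s)$ be arbitrary; writing $g=\sum_i c_i f_i$ (possible since $g\in W$) gives $g(x)=\sum_i c_i f_i(x)=0$, so $x$ is a common zero of all of $\cN_+(s)$, i.e.\ $x\in\cV_+(s)$.

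With the claim in hand the proof concludes at once: setting $p:=f_1+\dots+f_r\in\cN_+(s)$, the nonnegativity remark yields $\cZ(p)=\bigcap_{i=1}^r\cZ(f_i)=\cV_+(s)$, as desired. I do not expect a genuine obstacle here; the only conceptual point is the passage from an a priori infinite defining family for $\cV_+(s)$ to a finite one, which is supplied for free by the finite-dimensionality of $E$ (a basis of $W$ drawn from the cone), while the convexity and nonnegativity of $\cN_+(s)$ do the rest by turning an intersection of zero sets into the zero set of a single function.
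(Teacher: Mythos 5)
Your proof is correct and is essentially the paper's own argument: the paper likewise selects a maximal linearly independent subset of the cone of annihilated nonnegative elements, sums it to get $p$, uses nonnegativity to identify $\cZ(p)$ with the common zero set of the chosen elements, and uses the spanning property to conclude every element of $\cN_+(s)$ vanishes there, with the inner-point case $\cN_+(s)=\{0\}$, $p=0$ treated separately. The only difference is cosmetic: you work directly with functions in $E$, whereas the paper phrases the same steps in terms of the coordinate vectors $u\in\R^m$ and supporting hyperplanes via Lemma \ref{N-+(s)}.
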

\begin{proof}
If $s$ is an inner point of $\cS$, then $\cN_+(s)=\{0\}$ by Proposition \ref{N-+(s)11},  hence $\cV_+(s)=\cX$; so  we can set $p=0$.

Now let $s$ be a boundary point of $\cS$. Let $\cF$ be the set of vector $ u\in \R^m$ such that $h_u(s)=0$ and $ h_u(t)\geq 0$  for all $ t\in {\cS}$. Since $s$ is a boundary point, $\cF$ contains at least one nonzero vector. Then  $\{H_u\cap {\cS}:  u\in \cF, u\neq 0 \}$ is the set of exposed faces of $\cS$. Let $u_1,\dots,u_k$ be a maximal linearly independent subset of $\cF$. Set $u:=u_1+\dots +u_k$.  We show that $p:=f_u$ has the desired properties. Obviously, $u\in \cF$. Hence $f_u\in \cN_+(s)$ by Lemma \ref{N-+(s)} and $H_u\cap {\cS}$ is an exposed face of $\cS$. Thus $\cV_+(s)\subseteq \cZ(p)$ by definition.  Suppose  $x\in \cZ(p)=\cZ(f_u)$. Let $f_v\in \cN_+(s)$. Then $v\in \cF$ by Lemma \ref{N-+(s)}. Hence $v$ is linear combination $v=\sum_i \lambda_i u_i$ of $u_1,\dots,u_k$. Since $f_u(x)=f_{u_1}(x)+\dots+f_{u_k}(x)=0$ and $f_{u_j}\geq 0$, we have $f_{u_i}(x)=0$ for all $i$ and therefore $f_v(x)=\sum_i \lambda_i f_{u_i}(x) =0$. Since $f_v\in \cN_+(s)$ was arbitrary, we have shown that $x\in \cV_+(s)$. 
\end{proof}

Note that the element $p$ in the preceding proposition is not necessarily unique.

For inner points of $\cS$ we have $\cW(s)=\cX$,  hence $\cW(s)=\cV_+(s),$ by Lemma \ref{strictypositivef} and Theorem \ref{strictposLnonunique}. In general, $\cW(s)\neq \cV_+(s)$ as we have seen by  Example \ref{exW+neqV+}.

The next theorem characterizes those boundary points for which $\cV_+(s) = \cW_+(s)$.

\begin{thm}\label{thm:WV+cases}
Let $s$ be a boundary points of $ {\cS}$. Then  $\cV_+(s) = \cW_+(s)$ if and only if  $s$ lies in the relative interior of an exposed face of the moment cone $\cS$.
\end{thm}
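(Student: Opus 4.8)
The plan is to understand the geometric content of the theorem via the structure of exposed faces worked out earlier, and to prove each implication separately. The key object is the set $\cF$ of vectors $u \in \R^m$ with $h_u(s)=0$ and $h_u(t)\geq 0$ for all $t\in\cS$; by Lemma \ref{N-+(s)} this is exactly the set of $u$ with $f_u \in \cN_+(s)$, and by Proposition \ref{thm:OneGenerator} there is a distinguished $u^\ast \in \cF$ (the sum of a maximal linearly independent family in $\cF$) whose zero set $\cZ(f_{u^\ast})$ equals $\cV_+(s)$ and whose face $H_{u^\ast}\cap\cS$ is the smallest exposed face containing $s$. The statement "$s$ lies in the relative interior of an exposed face" should be read as: $s$ is a relative interior point of this minimal exposed face $F:=H_{u^\ast}\cap\cS$.

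\emph{The direction $\cV_+(s)=\cW(s)$ implies relative interiority.} I would argue by contradiction. If $s$ is a relative boundary point of $F$, then within the lower-dimensional cone $F$ (spanning the subspace $H_{u^\ast}$) there is a further supporting hyperplane $H_w\cap H_{u^\ast}$ of $F$ at $s$, coming from some $w$ with $h_w(s)=0$, $h_w(t)\geq 0$ on $F$, but $h_w$ not vanishing identically on $F$. I would then try to combine $w$ with the original face generator: for small $\varepsilon>0$ the vector $u^\ast+\varepsilon w$ (or $u^\ast$ perturbed to remain in $\cF$) gives a new $f\in\cN_+(s)$ whose zero set is strictly smaller than $\cV_+(s)$ restricted to $\cW(s)$-relevant points, exhibiting a point $x\in\cV_+(s)\setminus\cW(s)$. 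Concretely, one expects that any atom $x\in\cW(s)$ must satisfy $f_v(x)=0$ for \emph{every} $v\in\cF$ (since $s_\sF(x)$ then lies on the whole face), whereas a relative boundary point $s$ admits $v\in\cF$ with $f_v$ not in the span defining $\cV_+(s)$, producing points of $\cV_+(s)$ that no atom can reach.

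\emph{The converse.} If $s$ is in the relative interior of $F$, I would show every point of $\cV_+(s)$ is an atom by applying the machinery of Section \ref{setofatoms} to a restricted space. Set $\cU:=\cV_+(s)=\cZ(f_{u^\ast})$, a closed subset of $\cX$; by Proposition \ref{thm:OneGenerator} and Lemma \ref{zerosupp}, every representing measure of $s$ is supported in $\cU$, so the hypothesis $\supp\mu\subseteq\cU$ of Corollary \ref{atomcV} holds. It then suffices to verify the separation condition: if $f\in E$ satisfies $f\geq 0$ on $\cU$ and $L_s(f)=0$, then $f=0$ on $\cU$. This is exactly where relative interiority is used: I would express the relation $L_s(f)=0$ in terms of $h$-functionals on the face $F$, where $s$ being a relative interior point forces $f$ to annihilate all of $\cV_+(s)$ rather than merely a subvariety. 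Granting this, Corollary \ref{atomcV} yields $\cU=\cW(L_s)$, i.e.\ $\cV_+(s)=\cW(s)$.

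\emph{The main obstacle} is the forward direction and the precise translation of "relative interior of an exposed face" into a statement about $\cF$ and the generator $f_{u^\ast}$. The delicate point is to show that for a relative interior point $s$, the \emph{only} nonnegative $f\in E$ with $L_s(f)=0$ that can vanish on $\cU$ are those vanishing on \emph{all} of $\cU$; equivalently, that no proper subface of $F$ contains $s$, which is the defining property of relative interiority. I expect the cleanest route is to characterize $\cW(s)$ intrinsically as $\{x\in\cX: s_\sF(x)\in F,\ s \text{ is a rel.\ interior point of the face of } \cS \text{ generated by } s_\sF(x) \text{ and } s\}$, and to check both implications against this description; handling the possible infiniteness of $\cW(s)$ via Lemma \ref{propW}(iii) rather than Lemma \ref{propW}(ii) is a routine but necessary adjustment.
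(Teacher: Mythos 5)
Your converse direction is essentially sound, and it takes a genuinely different route from the paper: you reduce to the minimal exposed face $F:=H_{u^\ast}\cap\cS$ and invoke Corollary \ref{atomcV}, whereas the paper argues directly that for $x\in\cV_+(s)$ one has $s_\sF(x)\in F$, hence $s-c\,s_\sF(x)\in F$ for small $c>0$ by relative interiority, and adding $c\delta_x$ to a representing measure of the difference puts $x$ into $\cW(s)$. The step you leave as ``granting this'' is fillable: since $F=\mathrm{cone}\{s_\sF(x):x\in\cV_+(s)\}$ (any $t\in H_{u^\ast}\cap\cS$ is a positive combination of vectors $s_\sF(y_j)$ with $f_{u^\ast}(y_j)=0$), a function $f_v\geq 0$ on $\cU=\cV_+(s)$ with $L_s(f_v)=h_v(s)=0$ yields $h_v\geq 0$ on $F$ vanishing at the relative interior point $s$, hence $h_v\equiv 0$ on $F$, i.e.\ $f_v=0$ on $\cU$.

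The forward direction, however, contains a genuine gap, and the specific mechanism you propose cannot be repaired. You want to upgrade a supporting functional $w$ of $F$ at $s$ (nonnegative on $F$ only) to an element $u^\ast+\varepsilon w$ of $\cF$, i.e.\ to a new function in $\cN_+(s)$ ``whose zero set is strictly smaller than $\cV_+(s)$''. This is impossible: by the definition (\ref{defv+s}), \emph{every} element of $\cN_+(s)$ vanishes identically on $\cV_+(s)$, so no element of $\cN_+(s)$ can separate points inside $\cV_+(s)$; equivalently, if $u^\ast+\varepsilon w$ were in $\cF$, then $h_{u^\ast}+\varepsilon h_w$ would vanish on $F=\mathrm{cone}\{s_\sF(x):x\in\cV_+(s)\}$, and since $h_{u^\ast}\equiv 0$ on $F$ this forces $h_w\equiv 0$ on $F$, contradicting the choice of $w$. (This is exactly the classical phenomenon that an exposed face of an exposed face need not be exposed in the original cone.) For the same reason your claim that a relative boundary point admits $v\in\cF$ with $f_v$ not vanishing on $\cV_+(s)$ is false. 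The contradiction route can be saved, but by a different use of $w$: choose $x_0\in\cV_+(s)$ with $f_w(x_0)>0$ (possible since $h_w\not\equiv 0$ on $F$); every $\mu\in\cM_{L_s}$ is supported in $\cV_+(s)$, where $f_w\geq 0$ and $\int f_w\,d\mu=h_w(s)=0$, so Lemma \ref{zerosupp}, applied on the closed set $\cV_+(s)$, gives $\mu(\{x_0\})=0$ for all $\mu$, i.e.\ $x_0\in\cV_+(s)\setminus\cW(s)$, contradicting the hypothesis. The paper instead argues directly, with no contradiction: assuming $\cV_+(s)=\cW(s)$, it chooses $x_1,\dots,x_k\in\cV_+(s)$ whose moment vectors form a basis of the span of the face, averages representing measures having each $x_i$ as an atom, and uses the positivity of these masses to show $s+cv\in H_u\cap\cS$ for all $v$ in that span and small $|c|$, which exhibits $s$ as a relative interior point.
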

\begin{proof} 
By Proposition \ref{thm:OneGenerator}, there exists $p\in \cN_+(s)$ such that $\cZ(p)=\cV_+(s)$. Then $p$ is of the form $p=f_u$ for some $u\in \R^m$ and $H_u$ is a finite-dimensional vector space such that $s_\sF(x)\in H_u$ for  $x\in \cZ(f_u)=\cV_+(s)$. Let us choose  $x_1,...,x_k\in \cZ(f_u)$  such that the vectors $s_\sF(x_1),...,s_\sF(x_k)$ are linearly independent and span  $H_u$. 

First suppose that $\cV_+(s) = \cW(s).$ Then $x_i\in \cW(s)$, so there exists an atomic representing measure $\mu_i$ of $s$ such that $x_i\in\supp \, \mu_i$. Then $\mu := \frac{1}{k}\sum_{i=1}^k \mu_i$ is also a representing measure of $s$ and $x_i\in {\supp}\, \mu$ for all $i$. 

We show that $s$ is an inner point of the exposed face $H_u\cap {\cS}$ of the moment cone $\cS$. Let $v\in H_u$. 
 Since the $s_\sF(x_i)$ are linearly independent and span $H_u$, there are reals $c_1,...,c_k$ such that $v = \sum_{i=1}^k c_i s_\sF(x_i)$. Since the masses of $\delta_{x_i}$ are  positive in $\mu$, there exists a $\varepsilon>0$ such that $s+c\cdot v \in H_u\cap {\cS}$ for all $c\in (-\varepsilon,\varepsilon)$, that is, $s$ is an interior point of the exposed face $H_u\cap {\cS}$.

Conversely,  suppose now that $s$ is an inner point of some exposed face $F$ of $\cS$. Let $x\in \cV_+(s)$.  Then $s_\sF(x)\in  F$. Since $s$ in an inner point, there is a $c>0$ such that $s' = s-c\cdot s(x)\in F$. If $\mu'$ is  representing measure $\mu'$ of $s'$, then $\mu = \mu' + c\cdot \delta_x$ is a representing measure of $s$ and $\mu(\{x\})\geq c>0$, so that $x\in \cW(s)$. Since always  $\cW(s)\subseteq  \cV_+(s)$, we have shown that $ \cW(s)= \cV_+(s)$.
\end{proof}

\section{Set of atoms $\cW(L)$ and  core variety $\cV(L)$}\label{corersetofatoms}%%%
%%%%%%%%%%%%%%%%%%%%%%%%%%%%%%%%%%%%%%%%%%%%%%%%%%%%%%%%%%%%%%%%%%%%%%%%%%%%%%%%%%%%

Throughout this section,  $L$ is a moment functional on  $E$ such that $L\neq 0$.

We define inductively subsets $\cN_k(L)$, $k\in \nset,$ of ${\cA}$ and  subsets $\cV_j(L)$, $j\in \nset_0,$ of $\cX$ by $\cV_0(L)=\cX$,
\begin{align*} 
\cN_k(L)&:=\{ p\in \cA: L(p)=0,~~ p(x)\geq 0 ~~{\rm for}~~x\in \cV_{k-1}(L)\, \},~~ k\in \nset,\\ 
\cV_j (L)&:=\{ t\in \cX: p(t)=0~~{\rm for}~p\in \cN_j(L)\},~~ j\in \N.
\end{align*}
If $\cV_k(L)$ is empty for some $k$, we set $\cV_j(L)=\cV_k(L)=\emptyset$ for all $j\geq k, j\in \nset$.

For $k=1$ these notions coincide with those defined  
by (\ref{defn+s}) and (\ref{defv+s}), that is,  $\cN_1(L)=\cN_+(L)\equiv \cN_+(s)$ and  $\cV_1(L)=\cV_+(L)\equiv\cV_+(s),$ where $s$ is the moment sequence of $L.$

The following important concept was defined  and studied by L. Fialkow \cite{fialkoCoreVari}, see also \cite{blekfia},  for arbitrary linear functionals. We will use it only for moment functionals.

\begin{dfn}\label{corevariety} 
The {\em core variety}  $\cV(L)$ of the moment functional $L$ on ${\cA}$ is 
\[\cV(L):=\bigcap_{k=0}^\infty \cV_k(L).\]
\end{dfn}

From the definition it is clear that $\cV_k(L)\subseteq \cV_{k-1} (L)$ for $k\in \nset.$ Further, if $\mu$ is representing measure of $L$, then a repeated application of Lemma \ref{zerosupp} yields
\begin{equation}\label{supprepvl}
{\supp}\, \mu \subseteq \cV (L)\subseteq \cV_{j} (L)\quad \text{for}\quad j\in \nset.
\end{equation}

\begin{prop}\label{termiantes}
There exists  $k\in \nset_0$, $k\leq \dim E$, such that 
\begin{equation}
\cX =\cV_0(L) \supsetneqq \cV_1(L)\supsetneqq ... \supsetneqq \cV_k(L)= \cV_{k+j}(L)=\cV(L),\quad j\in \nset.
\end{equation}
\end{prop}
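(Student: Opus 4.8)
The plan is to track the descending chain $(\cV_j(L))_j$ by attaching to each set the linear subspace
$V_j := \lin\{s_\sF(x) : x\in\cV_j(L)\}\subseteq\R^m$, and to show that every strict inclusion in the chain forces a strict drop in $\dim V_j$. Since $\dim V_0=\dim E=m$, this will bound the length of the strictly decreasing part.

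First I would record two monotonicity facts. The inclusion $\cV_k(L)\subseteq\cV_{k-1}(L)$ is already noted in the text; it holds because $\cV_{k-1}(L)\subseteq\cV_{k-2}(L)$ makes the nonnegativity requirement defining $\cN_k(L)$ weaker than that defining $\cN_{k-1}(L)$, whence $\cN_{k-1}(L)\subseteq\cN_k(L)$ and the corresponding zero sets shrink. Next comes the key structural remark: $\cV_{j+1}(L)$ depends on $\cV_j(L)$ alone, through $\cN_{j+1}(L)$. Hence, as soon as $\cV_{k+1}(L)=\cV_k(L)$ for some $k$, we get $\cN_{k+2}(L)=\cN_{k+1}(L)$ and therefore $\cV_{k+2}(L)=\cV_{k+1}(L)$; by induction the chain is constant from index $k$ on. So, letting $k$ be the first index with $\cV_{k+1}(L)=\cV_k(L)$, we automatically obtain $\cX=\cV_0(L)\supsetneqq\cdots\supsetneqq\cV_k(L)=\cV_{k+j}(L)$ for all $j\in\N$, and $\cV(L)=\bigcap_{j}\cV_j(L)=\cV_k(L)$, because the sets are decreasing and eventually constant.

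The heart of the argument is the dimension drop. Suppose $\cV_{j+1}(L)\subsetneqq\cV_j(L)$. If every $p\in\cN_{j+1}(L)$ vanished identically on $\cV_j(L)$, then each point of $\cV_j(L)$ would lie in the common zero set of $\cN_{j+1}(L)$, i.e.\ $\cV_j(L)\subseteq\cV_{j+1}(L)$, contradicting strictness. So there is $p=f_u\in\cN_{j+1}(L)$ and a point $x_0\in\cV_j(L)$ with $f_u(x_0)\neq 0$. Writing $f_u(x)=\langle s_\sF(x),u\rangle$, every $t\in\cV_{j+1}(L)$ satisfies $\langle s_\sF(t),u\rangle=0$, so $V_{j+1}\subseteq u^\perp$; on the other hand $s_\sF(x_0)\in V_j$ with $\langle s_\sF(x_0),u\rangle\neq 0$ shows $V_j\not\subseteq u^\perp$. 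Together with $V_{j+1}\subseteq V_j$ this yields $V_{j+1}\subseteq V_j\cap u^\perp\subsetneqq V_j$, hence $\dim V_{j+1}\leq\dim V_j-1$.

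Finally I would assemble the bound. Since $\sF$ is a basis of $E$, the function $f_v$ vanishes identically on $\cX$ only for $v=0$, so no nonzero vector is orthogonal to all $s_\sF(x)$; thus $V_0=\lin\{s_\sF(x):x\in\cX\}=\R^m$ and $\dim V_0=m=\dim E$ (equivalently, $\R^m=\cS-\cS$, as noted before Lemma \ref{N-+(s)}). Each of the strict steps $\cV_0(L)\supsetneqq\cdots\supsetneqq\cV_k(L)$ strictly lowers $\dim V_j$ by the previous paragraph, so $0\leq\dim V_k\leq m-k$, giving $k\leq m=\dim E$. I expect the only genuinely delicate point to be the dimension-drop lemma—specifically the extraction of a function in $\cN_{j+1}(L)$ that is not identically zero on $\cV_j(L)$, and the passage from strict inclusion of sets to strict inclusion $V_{j+1}\subsetneqq V_j$ of the spans; the remainder is bookkeeping with the monotone chain and the idempotence of the construction past its first repetition.
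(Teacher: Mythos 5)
Your proof is correct, and it takes a genuinely different route from the paper's. The paper fixes a representing measure $\mu$ of $L$, uses $\supp\mu\subseteq\cV_j(L)$ to restrict $L$ to a moment functional $L^{(j)}$ on $E^{(j)}:=E\lceil\cV_j(L)$, and applies Proposition \ref{thm:OneGenerator} to each $L^{(j)}$ to produce a single function $p_{j+1}\in E$ whose zero set inside $\cV_j(L)$ is exactly $\cV_{j+1}(L)$; the bound $k\leq\dim E$ then comes from showing that the functions $p_1,\dots,p_r$ attached to strict inclusions are linearly independent in $E$ (after first disposing of the case where $L$ is an inner point of $\cL$). You instead attach to each variety the subspace $V_j=\lin\{s_\sF(x):x\in\cV_j(L)\}\subseteq\R^m$ and show that a strict inclusion $\cV_{j+1}(L)\subsetneqq\cV_j(L)$ forces $\dim V_{j+1}<\dim V_j$, using only the definitional fact that some $f_u\in\cN_{j+1}(L)$ fails to vanish at some point of $\cV_j(L)$; since $V_0=\R^m$ because $\sF$ is a basis, at most $m$ strict steps can occur. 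The two counts are dual to one another --- the paper accumulates linearly independent functions in $E$, one per strict step, while you peel off dimensions in $\R^m$ --- and both yield $k\leq m=\dim E$; your argument for constancy past the first repetition ($\cV_{k+1}=\cV_k$ gives $\cN_{k+2}=\cN_{k+1}$, hence $\cV_{k+2}=\cV_{k+1}$, and so on inductively) coincides with the paper's. What your route buys: it is self-contained and more general, needing no representing measure, no restriction construction, no inner/boundary case split, and no appeal to Proposition \ref{thm:OneGenerator}; in fact it never uses that $L$ is a moment functional, so it proves the stabilization for an arbitrary nonzero linear functional, which is the generality in which Fialkow's core variety is defined. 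What the paper's route buys: it exhibits each $\cV_{j+1}(L)$ as the zero set of a single element of $E$ restricted to $\cV_j(L)$, a structural byproduct of independent interest, and it reuses machinery (Proposition \ref{thm:OneGenerator} and the restricted functionals $L^{(j)}$) that the paper has already developed and exploits again, e.g.\ in the proof of Theorem \ref{wlequadlvl}.
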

\begin{proof}
We fix  a representing measure  $\mu$ of $L$. Let $j\in \nset_0$. We    denote by  $E^{(j)}:=E\lceil \cV_j(L)$  the vector space of functions $f\lceil \cV_j(L)$, $f\in E$,  and by $\cL^{(j)}$  the corresponding  cone of moment functionals on $E^{(j)}$. Note that in general $\dim\, E^{(j)}$ is  smaller than  $\dim\, E$. Since ${\rm supp}\, \mu\subseteq \cV_j(L)$ by (\ref{supprepvl}),  $L$  yields a moment functional  $L^{(j)}\in\cL^{(j)}$ given by  $\mu$. Clearly, $E^{(0)}=E$, $\cV_0(L)=\cX$,  $L=L^{(0)}$. By these definitions,  $\cN_{j+1}(L)=\cN_+(L^{(j)})$ and $\cV_{j+1}(L)=\cV_+(L^{(j)})$. From  Proposition \ref{thm:OneGenerator}, applied to the moment sequence of $L^{(j)},$ we conclude that there  exists $p_{j+1}\in E$ such that $p_{j+1}\lceil \cV_j(L)\in \cN_+(L^{(j)})=\cN_{j+1}(L)$   and  
\begin{equation}\label{VJ+j1}
\cV_+(L^{(j)}) =\cV_{j+1}(L)=\cZ(p_{j+1}\lceil \cV_j(s))=\{x\in \cV_j(L): p_{j+1}(x)=0\}.
\end{equation}

First suppose that $L$ is an inner point of $\cL$. Then, by Proposition \ref{N-+(s)11}(ii) we have  $\cN_1(L)=\{0\}$ and hence $\cV_1(L)=\cX$. From the corresponding definitions it follows that $\cN_j(L)=\{0\}$ and  $\cV_j(L)=\cX$ for all $j\in \N$, so the assertion holds with $k=0$.

Now let $L$ be a boundary point of 
$\cL.$ Then $\cN_1(L) \neq \{0\}$ and hence $\cV_1(L) \neq \cX$. Assume that $r\in \nset$ and  $\cV_0(L)\supsetneqq ... \supsetneqq \cV_r(L)$.
We show that  $p_1,\dots,p_{r}$ are linearly independent. Assume  the contrary. Then   $\sum_{j=1}^r \lambda_j p_j=0$, where $\lambda_j\in \rset$, not all zero. Let $n$ be the largest index such that $\lambda_n\neq 0$. Then $p_n(x)=\sum_{j<n} \lambda_j\lambda_n^{-1}p_j$. (The sum is set zero if $n=1$.) Since $\cV_i(L)\subseteq \cV_{j}(L)$ if $j\leq i$ and $p_j$ vanishes on $\cV_j(L)$ by (\ref{VJ+j1}), it follows that $p_n=0$ on $\cV_{n-1}(L)$. Hence $\cV_n(L)\subseteq \cV_{n-1}(L)$ by (\ref{VJ+j1}), a contradiction. 

From the preceding 
two paragraphs 
it follows that there exists  a number $k\in \nset_0$, $k\leq \dim E$, such that $\cV_k(L)= \cV_{k+1}(L)$. Then $\cN_{k+1}(L)=\cN_{k+2}(L)$ and  hence $\cV_{k+1}(L)=\cV_{k+2}(L).$ Proceeding by induction we get $\cV_{k+j}(L)=\cV_k(L)$ for $j\in \nset$, so that $\cV(L)=\cV_k(L)$. 
\end{proof}

\begin{thm}\label{wlequadlvl}
If $L$ is a moment functional on $E$ and $L\neq 0$, then $\cW(L)=\cV(L)$.
\end{thm}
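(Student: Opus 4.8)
The plan is to prove the two inclusions $\cW(L)\subseteq\cV(L)$ and $\cV(L)\subseteq\cW(L)$ separately. The first is immediate: if $x\in\cW(L)$, then $x$ is an atom of some $\mu\in\cM_L$, whence $x\in\supp\mu\subseteq\cV(L)$ by (\ref{supprepvl}). Since $L\neq 0$, Lemma \ref{propW_+(L}(iii) gives $\cW(L)\neq\emptyset$, so $\cV(L)\neq\emptyset$ as well; this will let me apply the atom-producing machinery below without a vacuity worry.

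For the reverse inclusion, which is the heart of the matter, I would exploit the stabilization established in Proposition \ref{termiantes}. Fix $k\le\dim E$ with $\cV(L)=\cV_k(L)=\cV_{k+1}(L)$ and put $\cU:=\cV(L)$. This set is closed in $\cX$, being the zero set of finitely many continuous functions of $E$, and any representing measure of $L$ has support contained in $\cU$ by (\ref{supprepvl}); such a measure exists because $L$ is a moment functional. I would then invoke Corollary \ref{atomcV} with this $\cU$. The only hypothesis of that corollary that needs checking is the separation condition: if $f\in E$ satisfies $f\geq 0$ on $\cU$ and $L(f)=0$, then $f=0$ on $\cU$. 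But such an $f$ lies in $\cN_{k+1}(L)$ by definition, and in the notation of the proof of Proposition \ref{termiantes} (with $E^{(k)}=E\lceil\cU$) one has $\cN_{k+1}(L)=\cN_+(L^{(k)})$ while $\cV_{k+1}(L)=\cV_+(L^{(k)})=\cV_k(L)=\cU$. Thus every element of $\cN_+(L^{(k)})$ vanishes on all of $\cU$, i.e.\ is the zero element of $E^{(k)}$, which is exactly to say $f=0$ on $\cU$. Corollary \ref{atomcV} then yields that every $x\in\cU=\cV(L)$ is an atom of some representing measure of $L$, that is $\cV(L)\subseteq\cW(L)$, completing the argument.

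The whole proof thus hinges on the single observation that the recursion has reached its fixed point: the equality $\cV_{k+1}(L)=\cV_k(L)$ says precisely that the moment functional $L^{(k)}$ induced on $E\lceil\cV(L)$ satisfies $\cN_+(L^{(k)})=\{0\}$, hence is strictly $(E^{(k)})_+$-positive. Once this is recognized, Corollary \ref{atomcV} (itself powered by Theorem \ref{strictposLnonunique}, applied on the locally compact Hausdorff space $\cU$) does the remaining work. I expect the main obstacle to be essentially bookkeeping rather than conceptual: confirming that the passage to the closed set $\cU=\cV(L)$ is lossless, so that the representing measures of $L$ and of $L^{(k)}$ coincide once their support lies in $\cU$ — which is exactly what (\ref{supprepvl}) guarantees — and checking that the standing separation hypothesis (\ref{possepration}) is inherited by $(\cU,E^{(k)})$ from $(\cX,E)$ simply by restriction of the functions $f_x$.
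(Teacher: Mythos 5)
Your proposal is correct and is essentially identical to the paper's own proof: both obtain $\cW(L)\subseteq\cV(L)$ from (\ref{supprepvl}) and then apply Corollary \ref{atomcV} to $\cU=\cV(L)$, using the stabilization $\cV_k(L)=\cV_{k+1}(L)$ from Proposition \ref{termiantes} to verify the separation hypothesis. The extra remarks (non-emptiness of $\cW(L)$, closedness of $\cU$, inheritance of condition (\ref{possepration})) are harmless elaborations of details the paper leaves implicit.
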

\begin{proof}
From (\ref{supprepvl}) it follows at once that $\cW(L)\subseteq \cV(L)$. 

By Proposition \ref{termiantes}, there exists a number $k\in \nset_0$ such that  (\ref{termiantes}) holds. 
%Let us retain the notation from the preceding proof. 
We show that the set $\cU:=\cV(L)$ fulfills the assumptions of Corollary \ref{atomcV}. By  (\ref{supprepvl}), ${\rm supp}\, \mu\subseteq \cV(L)$. Further, if $f\in E$ satisfies $f(x)\geq 0$ on $\cU=V_k(L)$ and $L(f)=0$, then   $f\in \cN_{k+1}(L)$ and hence $f(x)=0$ on $\cV_{k+1}(L)=\cV(L)=\cU.$ Thus Corollary \ref{atomcV} applies and gives the converse inclusion $\cU=\cV(L)\subseteq \cW(L)$.
\end{proof}

\section{Differential structure of the moment cone}\label{differentailstructure}%%%
%%%%%%%%%%%%%%%%%%%%%%%%%%%%%%%%%%%%%%%%%%%%%%%%%%%%%%%%%%%%%%%%%%%%%%%%%%%%%%%%%%%

In this section, we set $\cX=\rset^n$ and assume that $E$ is a {\bf finite-dimensional} linear subspace of $C^1(\rset^n;\rset)$. We will use the differential structure to develop further tools to study the moment cone and moment sequences.

Set $\rset_>:=(0,+\infty)$. Let $C=(c_1,...,c_K)$ and $X=(x_1,...,x_K) $, where $c_j>0$ and $x_j\in \R^n$ for $j=1,\dots,K$, and define a $k$-atomic measure, where $k\leq K$, by
\[\mu_{(C,X)}=\sum_{j=1}^K c_j \delta_{x_j}.\]
Note that $\mu_{(C,X)}$ is not $K$-atomic in general, since  we do not require that the points $x_j$ are pairwise different. By Corollary \ref{richtercor},  each moment sequence has a $k$-atomic representing measure with $k\leq m$. Therefore, if $m\leq K$, the moment sequences of such measures $\mu_{(C,X)}$ exhausts the whole moment cone $\cS$.

We write $(C,X)\in \cM_{K,s}$ if the  $\mu_{(C,X)}$ is a representing measure of  $s\in\cS$.

\begin{dfn}
For  $x\in \R^n$, and $(C,X)\in \R_>^k\times (\R^n)^k$ we define
\begin{equation}
S_{k}(C,X) := \sum_{j=1}^k c_j s_\sF(x_j), \quad \text{where}~~s_\sF(x) := (f_1(x),\dots,f_m(x))^T
\end{equation}
\end{dfn}

Clearly, $S_k$\, is a $C^1$-map of $\R_>^k\times \R^{nk}$ into $\R^m$. Let $DS_k$ denote its total derivative. We write
\begin{equation}
\begin{split}\label{eq:totalderivative}
DS_k (C,X)&= (\partial_{c_1}S_{k},\partial_{x_1^{(1)}}S_{k},...,\partial_{x_1^{(n)}}S_{k},\partial_{c_2}S_{k},...,\partial_{x_k^{(n)}}S_{k})\\
&= (s(x_1), c_1\partial_1 s|_{x=x_1},...,c_1\partial_d s|_{x=x_1},s(x_2),...,c_1 \partial_n s|_{x=x_k}).
\end{split}
\end{equation}
The following is another very simple example for which $\cW(s)\neq \cV_+(s)$.

\begin{exa}
Let $\sF := \{1,x,x^2(x-1)^2\}$, $E={\Lin}\, {\sF}$, and $\cX=\rset $. Set $s: =  s_{\sF}(0) = (1,0,0)^T$. Then
\[DS_1 = (s(0),s'(0)) = \begin{pmatrix} 1 & 0\\ 0 & 1\\ 0 & 0\end{pmatrix}\]
and
\[\ker DS_1^T = \R \cdot v \quad\text{with}\quad v = (0,0,1)^T.\]
Set $p(x) := \langle v, s_{\sF}(x)\rangle = x^2 (x+1)^2$. One verifies that $\cN_+(s)=\rset_\geq \cdot p.$ Hence $\cV_+(s) = \{0,1\}$, but $\cW(s)=\{0\}$. Thus, $\cW(s)\neq \cV_+(s)$.
\end{exa}

\begin{dfn}\label{def:I(s)d(s)}
For  $s\in \cS$ we define the \emph{image} $\Im(s)$, the set $I(s)$,  and the \emph{defect number} $d(s)$ by
\begin{align}
\Im(s) &:= \bigcup_{k\in\nset} \bigcup_{(C,X)\in \cM_{k,s}}\range DS_k(C,X),\\
I(s) &:= \bigcap_{v\in \Im(s)^\bot} \Zset(g_v),\\
d(s) &:= \codim \Im(s) = m - \dim\Im(s).
\end{align}
\end{dfn}

Note that the dimension of $\Im(s)$ is well-defined, since $\Im(s)$ is a linear subspace of $\R^m$ by the following lemma.

\begin{lem}\label{lem:Im(s)}
For each $s\in \cS$ there exist $k\in \N$ and $(C,X)\in \cM_{k,s}$   such that $\Im(s) = \range DS_k(C,X)$. In particular, $\Im(s)$ is a linear subspace of $\R^m$.
\end{lem}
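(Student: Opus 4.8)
The plan is to show that the union defining $\Im(s)$ is in fact attained by a single pair $(C,X)$, by exploiting the fact that ranges of the $DS_k$ are nested under the operation of combining atomic representations. The key observation is that for any two pairs $(C,X)\in\cM_{k,s}$ and $(C',X')\in\cM_{k',s}$ one can form a ``doubled'' representation of $s$ that sees both of their derivative ranges simultaneously. Concretely, given $(C,X)\in\cM_{k,s}$ and $(C',X')\in\cM_{k',s}$, the pair
\[
\left(\tfrac{1}{2}C,\tfrac{1}{2}C',X,X'\right)\in\cM_{k+k',s}
\]
is again a representing pair for $s$ (the masses scale so that the total still represents $s$, since $\tfrac12\mu_{(C,X)}+\tfrac12\mu_{(C',X')}\in\cM_L$). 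First I would verify from the explicit form of $DS_k$ in (\ref{eq:totalderivative}) that $\range DS_{k+k'}(\tfrac12 C,\tfrac12 C',X,X')$ contains both $\range DS_k(C,X)$ and $\range DS_{k'}(C',X')$: the columns of the doubled Jacobian are, up to positive scalar factors on the mass-derivative columns, exactly the union of the columns of the two individual Jacobians, so the spanned subspace is the sum of the two individual ranges.

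Next I would use this to argue that $\Im(s)$ is a linear subspace. Each $\range DS_k(C,X)$ is a linear subspace of $\R^m$, and the preceding step shows that the collection of these subspaces is \emph{directed}: for any two members there is a third member of the family (coming from a genuine representing pair) containing their sum. Consequently the union $\Im(s)$ is closed under addition and scalar multiplication, hence is a linear subspace of $\R^m$. This already gives the ``in particular'' clause.

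Finally, to produce a single maximizing pair, I would invoke finite-dimensionality. Since $\Im(s)\subseteq\R^m$ is a subspace, it has some finite dimension $d\le m$, and I can choose a pair $(C,X)\in\cM_{k,s}$ whose range $\range DS_k(C,X)$ has dimension equal to $\dim\Im(s)$ (such a pair exists because the dimensions of the ranges form a set of integers bounded by $m$, so the supremum is attained by some member of the family). Because that member is a linear subspace of $\Im(s)$ of the same dimension as $\Im(s)$, it must equal $\Im(s)$. This gives the desired $k$ and $(C,X)$ with $\Im(s)=\range DS_k(C,X)$.

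The main obstacle I anticipate is the bookkeeping in the first step: one must check carefully that concatenating the two atomic systems and halving the masses yields a bona fide element of $\cM_{k+k',s}$ (so that it legitimately belongs to the index set of the union), and that the total derivative of the concatenated system, evaluated at the halved masses, genuinely has range equal to the sum of the two constituent ranges rather than something smaller. The column structure in (\ref{eq:totalderivative}) makes this transparent once written out, since the $s(x_j)$ columns and the $c_j\partial_i s|_{x=x_j}$ columns of the two systems simply sit side by side, with the latter merely rescaled by the positive factor $\tfrac12$, which does not change the span.
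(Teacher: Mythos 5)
Your proposal is correct and takes essentially the same route as the paper: the paper's proof also rests on concatenating two representing pairs with rescaled masses and observing that the columns of the combined Jacobian span the sum of the two constituent ranges, which gives both the linearity of $\Im(s)$ and, applied to representing pairs for the vectors of a basis $\{v_1,\dots,v_l\}$ of $\Im(s)$ (with masses scaled by $1/l$), a single pair $(C,X)\in\cM_{k,s}$ with $\Im(s)=\range DS_k(C,X)$. The only soft spot in your endgame is the assertion that the attained maximal dimension equals $\dim\Im(s)$: this is not automatic for a union of subspaces (a plane is a union of lines), but it does follow from the directedness you established, since a maximal-dimension member whose range omitted some $v\in\Im(s)$ could be combined with a pair whose range contains $v$ to produce a member of strictly larger dimension, a contradiction.
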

\begin{proof}
Let us prove that $\Im(s)$ is a vector space. Obviously, $\range DS_{k}(C,X)$ is a vector space for any $(C,X)$. Let $v_1, v_2\in\Im(s)$. There are $(C_i,X_i)$, $k_i\in\N$, and $u_i\in\rset^{(m+1)k_i}$ such that
$v_i = DS_{k_i}(C_i,X_i)u_i$.
Then
\begin{align*}
\lambda_1 u_1 + \lambda_2 u_2
&= \lambda_1 DS_{k_1}(C_1,X_1)u_1 + \lambda_2 DS_{k_2}(C_2,X_2)u_2\\
&= \frac{1}{2}(DS_{k_1}(C_1,X_1),DS_{k_2}(C_2,X_2)) \begin{pmatrix}2\lambda_1 u_1\\ 2\lambda_2 u_2\end{pmatrix}\\
&\in\range DS_{k_1+k_2}((C_1/2,C_2/2),(X_1,X_2)) \subseteq \Im(s)
\end{align*}
for any $\lambda_1,\lambda_2\in\rset$.

Let $\{v_1,\dots,v_l\}$ be a basis of the finite-dimensional vector space $\Im(s)$. By the definition of the set $\Im(s)$  for each $i$ there exists  $(C_i,X_i)\in \cM_{k_i,s}$ such that $v_i\in\range DS_{k_i}(C_i,X_i)$. Define
\[k=k_1+\dots,k_l,~ C=(C_1/l,\dots,C_k/l),~ X=(X_1,\dots,X_k).\]
Clearly, $(C,X)\in \cM_{k,s}$. One easily verifies that   $v_i\in\range DS_{k}(C,X)$ for each $i$. Therefore,  $\Im(s) = \mathrm{span}\{v_1,...,v_l\}\subseteq \range DS_{k}(C,X).$ The converse inclusion $\range DS_{k}(C,X)\subseteq \Im(s)$ is trivial. 
\end{proof}

A pair $(C,X)\in \cM_{k,s}$ such that $\Im(s) = \range DS_k(C,X)$ is called a \emph{representing measure of $\Im(s)$}.

Let $\tilde{g}_1,...,\tilde{g}_{d(s)}\in\R^m$ be a basis of $\Im(s)^\perp$, where $\Im(s)^\perp$ denotes  the orthogonal complement of $\Im(s)$ with respect to the standard scalar product of $\R^m$. Then
\[g_i(\,\cdot\,) := \langle s_\sF(\,\cdot\,),\tilde{g}_i\rangle\]
are functions of $E$ and 
\[I(s) := \bigcap_{i=1}^{d(s)} \Zset(g_i).\]

\begin{lem}\label{lem:W(s)I(s)V(s)}
\begin{enumerate}
\item[\em (i)]  ${\Lin}\, \{\tilde{g}_1,...,\tilde{g}_{d(s)}\} = {\ker} DS_{k}(C,X)^T$ for each representing measure $(C,X)$ of $\Im(s)$.

\item[\em (ii)]  ${\Lin}\,\{g_1,...,g_{d(s)}\} = \{p\in E: p_{|\cW(s)}=0 \;\text{and}~ \partial_j p_{|\cW(s)}=0\; \forall j=1,...,n\}$.

\item[\em (iii)]  $\cW(s)\subseteq I(s)\subseteq \cV_+(s)$.

\item[\em (iv)]  $d(s) = 1 \;\Rightarrow\; I(s) = \cV_+(s)$.
\end{enumerate}
\end{lem}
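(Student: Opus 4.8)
The plan is to prove the four parts in order, with (ii) carrying the main content and (iii)--(iv) following from it together with Proposition \ref{thm:OneGenerator} and Theorem \ref{strictposLnonunique}. Part (i) is pure linear algebra: if $(C,X)$ is a representing measure of $\Im(s)$, then $\Im(s)=\range DS_k(C,X)$ by definition, so $\Im(s)^\perp=\ker DS_k(C,X)^T$ by the standard identity $(\range A)^\perp=\ker A^T$; since $\{\tilde g_1,\dots,\tilde g_{d(s)}\}$ is a basis of $\Im(s)^\perp$, this is exactly the asserted equality.

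The heart of the argument is (ii), where the task is to translate membership in $\Im(s)^\perp$ into a vanishing condition on the associated function. I would use that every $p\in E$ has the form $p=f_v$, $p(\cdot)=\langle s_\sF(\cdot),v\rangle$, for a unique $v\in\R^m$, and that $\partial_j p(\cdot)=\langle \partial_j s_\sF(\cdot),v\rangle$. Reading off the columns $s_\sF(x_j)$ and $c_j\,\partial_i s_\sF(x_j)$ of $DS_k(C,X)$ from \eqref{eq:totalderivative} and using $c_j>0$, one sees that $v\perp\range DS_k(C,X)$ holds exactly when $p$ and all its first partials vanish at every atom $x_j$ of $\mu_{(C,X)}$. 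Because $\Im(s)$ is the span taken over \emph{all} representing measures (Lemma \ref{lem:Im(s)}), the condition $v\in\Im(s)^\perp$ is equivalent to $p$ and its partials vanishing at every atom of every representing measure of $s$; as the union of all these atoms is precisely $\cW(s)$, this is exactly $p|_{\cW(s)}=0$ and $\partial_j p|_{\cW(s)}=0$ for all $j$. Since $p=f_v\in\Lin\{g_1,\dots,g_{d(s)}\}$ if and only if $v\in\Im(s)^\perp$, the set equality in (ii) follows.

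For (iii), the inclusion $\cW(s)\subseteq I(s)$ is the vanishing half of (ii): each $g_i$ vanishes on $\cW(s)$, and $I(s)=\bigcap_i\Zset(g_i)$. For $I(s)\subseteq\cV_+(s)$ I would show $\cN_+(s)\subseteq\Lin\{g_1,\dots,g_{d(s)}\}$. Given $q\in\cN_+(s)$ and $x\in\cW(s)$, choose $\mu\in\cM_s$ with $\mu(\{x\})>0$; then $0=L_s(q)=\int q\,d\mu\ge q(x)\,\mu(\{x\})\ge 0$ forces $q(x)=0$, and since the $C^1$-function $q$ is nonnegative on $\R^n$, the point $x$ is a global minimum, so $\nabla q(x)=0$. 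Thus $q$ and its partials vanish on $\cW(s)$, whence $q\in\Lin\{g_1,\dots,g_{d(s)}\}$ by (ii). Consequently any $x\in I(s)=\bigcap_i\Zset(g_i)$ annihilates all of $\cN_+(s)$, i.e.\ $x\in\cV_+(s)$.

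Finally, for (iv) it remains by (iii) to prove $\cV_+(s)\subseteq I(s)$ when $d(s)=1$, where $\Lin\{g_1,\dots,g_{d(s)}\}=\R g_1$. I would first note that $s$ must be a boundary point of $\cS$: for an inner point one has $\cN_+(s)=\{0\}$ (Proposition \ref{N-+(s)11}(ii)), hence strict $E_+$-positivity and $\cW(s)=\R^n$ (Theorem \ref{strictposLnonunique}), which via (ii) forces $g_1$ and its partials to vanish identically, so $\tilde g_1=0$, contradicting $d(s)=1$. For a boundary point, Proposition \ref{thm:OneGenerator} supplies a nonzero $p\in\cN_+(s)$ with $\Zset(p)=\cV_+(s)$; by the argument of (iii) we have $p\in\R g_1$, so $p=\lambda g_1$ with $\lambda\ne 0$, giving $\cV_+(s)=\Zset(p)=\Zset(g_1)=I(s)$. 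I expect the main obstacle to be the two-quantifier bookkeeping in (ii), namely the passage from vanishing at the atoms of a single representing measure of $\Im(s)$ to vanishing on the whole set $\cW(s)$; this rests essentially on $\Im(s)$ being the span over all representing measures combined with the description of $\cW(s)$ as the totality of atoms. The differential hypothesis $E\subseteq C^1(\R^n;\R)$ enters only mildly, through the remark that a nonnegative differentiable function has vanishing gradient at each of its zeros.
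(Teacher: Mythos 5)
Your proof is correct and follows essentially the same route as the paper's: (i) from the definitions and $(\range A)^\perp=\ker A^T$, (ii) by reading the columns of $DS_k(C,X)$ as values of $s_\sF$ and its partials and translating orthogonality to $\Im(s)$ into vanishing of $p$ and $\partial_j p$ on $\cW(s)$, (iii) via the inclusion $\cN_+(s)\subseteq\Lin\{g_1,\dots,g_{d(s)}\}$, and (iv) from the one-dimensionality of $\Im(s)^\perp$. You additionally make explicit two steps the paper leaves implicit — that a nonnegative $C^1$ function has vanishing gradient at each of its zeros, and the non-circular justification that $d(s)=1$ forces $s$ to be a boundary point — which is a refinement of, not a departure from, the paper's argument.
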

\begin{proof}
(i) follows at once from the corresponding definitions.

(ii): For ``$\subseteq$'' we have $\tilde{g}_i \perp s(x)$ and $\tilde{g}_i \perp \partial_j s(x)$ for all $x\in \cW(s)$, $j=1,...,n$, and $i=1,...,d(s)$, i.e.,
\[g_i(x) = \langle\tilde{g}_i,s_\sF(x)\rangle = 0\]
and
\[\partial_j g_i(x) = \langle\tilde{g}_i,\partial_j s_{\sF}(x)\rangle = 0\]
for all $ x\in \cW(s)$, $j=1,...,n$, and $i=1,...,d(s)$.

For ``$\supseteq$'' let $p(\,\cdot\,) = \langle\tilde{p},s_\sF(\,\cdot\,)\rangle\in E$ such that $p_{|\cW(s)}=0$ and $\partial_j p_{|\cW(s)} = 0$. Then $\tilde{p}\in\Im(s)^\perp=\mathrm{span}\,\{\tilde{g}_1,...,\tilde{g}_{d(s)}\}$.

(iii): First we prove that $\cW(s)\subseteq I(s)$. Let $x\in \cW(s)$, i.e., there is a  representing measure $(C,X)$ with $X=(x,x_2,...,x_k)$. Then
\begin{align*}
s_\sF(x)\in\range DS_{k}(C,X) \;&\Rightarrow\; s_\sF(x)\in\Im(s)\\
&\Rightarrow\; s_\sF(x)\perp \tilde{g}_i \;\forall i=1,...,d(s)\\
&\Rightarrow\; g_i(x) = \langle s_\sF(x),\tilde{g}_i\rangle = 0 \;\forall i=1,...,d(s)\\
&\Rightarrow\; x\in I(s).
\end{align*}

Now we prove that $I(s)\subseteq \cV_+(s)$. From (ii) and the inclusion 
\[\cN_+(s) \subseteq \{ p\in E \,|\, p_{|\cW(s)}=0 \;\text{and}\; \partial_i p_{|\cW(s)}=0\}\]
it follows that
\[I(s) =  \bigcap_{g\in\Im(s)^\perp} \Zset(g) \subseteq \bigcap_{p\in \cN_+(s)} \Zset(p) = \cV_+(s).\]

(iv): Since $d(s)=1>0$, $s$ is not an inner point of the moment cone. Hence there exists $p\in\cN_+(s), p\neq 0.$ Then $p\in\Lin \{g_1\} = \R\cdot g_1$ by (ii), i.e., $p=c\cdot g_1$ for some $c\neq 0$ and $I(s)=\cZ(g_1)=\cV(s)$.
\end{proof}

The following  examples show  that both inclusions in  (iii) can be   strict. In fact,  $\cW(s) \subsetneq I(s) = \cV_+(s)$ in Example \ref{exmp:one} and $\cW(s) = I(s) \subsetneq \cV_+(s)$ in Example \ref{exmp:two}.

\begin{exa}\label{exmp:one}
Let $\sA:= \{1,x^2,x^4,x^5,x^6,x^7,x^8\}$ and $a,b\in\R\setminus\{0\}$ s.t.\ $|a|\neq |b|$. Set $\mu = c_1 \delta_{-a} + c_2 \delta_{a} + c_3 \delta_b$ with $c_i>0$ for $i=1,2,3$ and let $s$ be the moment sequence of $\mu$. Then we have $\ker (DS_3)^T = \R \cdot v$, where 
\[ v = (a^4 b^4,-2 (a^4 b^2+a^2 b^4),a^4+4 a^2 b^2+b^4,0,-2 \left(a^2+b^2\right),0,1)^T,\]
\[p(x) := \langle v, s_\sA(x)\rangle = (x^2-a^2)^2  (x^2-b^2)^2.\]
Hence $\cV_+(s) = \{a,-a,b,-b\}.$ We show that $\cW(s) = \{a,-a,b\}.$ Clearly,  $\{a,-a,b\}\subseteq \cW(s)\subseteq \cV_+(s)$. Assume to the contrary that $\cW(s)=\{a,-a,b,-b\}$. Then $s$ has a representing measure of the form $\mu^* = c_1^* \delta_{-a} + c_2^* \delta_{a} + c_3^* \delta_{-b} + c_4^* \delta_b$ and we obtain
\begin{align*}
s &= c_1 s_\sA(-a) + c_2 s_\sA(a) + c_3 s_\sA(b) = c_1^* s_\sA(-a) + c_2^* s_\sA(a) + c_3^* s_\sA(b) + c_4^* s_\sA(-b)\\
0 & = (c_1^*-c_1) s_\sA(-a) + (c_2^*-c_2) s_\sA(a) + (c_3^*-c_3) s_\sA(b) + c_4^* s_\sA(-b).
\end{align*}
This implies that $c_4 = 0$ and $c_i = c_i^*$ for $i=1,2,3.$ Thus $\mu=\mu^*$ and we have
\[\cW(s) \subsetneq I(s) = \cV_+(s).\]
\end{exa}

\begin{exa}\label{exmp:two}
Let $\cX=\rset$,  
\[{\sA} :=  \{ x^{3\alpha+5\beta} \,|\, \alpha,\beta = 0,1,2\} = \{1, x^3, x^5, x^6, x^8, x^{10}, x^{11}, x^{12}, x^{16}\}\]
and let $s$ be the moment sequence of  $\mu := c_1 \delta_{-1} + c_2 \delta_0 + c_3 \delta_1 + c_4 \delta_2.$ Then
\[\ker DS_4^T = v_1 \R + v_2 \R,\]
where
\begin{align*}
v_1 &= (0, 12864, -17152, -14580, 29163, -14584, 4288, 0, 1)^T,\\
v_2 &= (0, 192, -256, -220, 441, -222, 64, 1, 0)^T,
\end{align*}
so that
\begin{align*}
p_1(x) &= \langle v_1, s_\sA(x) \rangle\\
& = (x-2)^2 (x-1)^2 x^3 (x+1)^2 (x^7+4 x^6+14 x^5+40 x^4+107 x^3+4556 x^2\\
&\ \qquad\qquad\qquad\qquad\qquad\qquad\;\; +3216 x+3216),\\
p_2(x) &= \langle v_2, s_\sA(x) \rangle\\
&= (x-2)^2 (x-1)^2 x^3 (x+1)^2 \left(x^3+68 x^2+48 x+48\right).
\end{align*}
But neither $p_1$ nor $p_2$ is non-negative. It it not difficult to verify that
\[p(x): =  p_1(x) - 67 p_2(x)
= (x-2)^2 (x-1)^2 x^6 (x+1)^2 (x+2)^2 \left(x^2+10\right)\]
is, up to a constant factor, the only non-negative element of  ${\cA}$  such that $L_s(p) = 0$. Therefore, $\cN_+(s)=\R_+ {\cdot}\ p$ and $\cV_+(s) = \{-2,-1,0,1,2\}$. Using that the vectors $s_\sA(-2),s_\sA(-1),s_\sA(0),s_\sA(1), s_\sA(2)$ are linearly independent, a similar reasoning  as in the preceding example shows  $\cW(s) = \{-1,0,1,2\}$. Thus, 
\[\cW(s) = I(s) \subsetneq \cV_+(s).\]
\end{exa}

The next theorem is the main result of this section. It collects various characterizations of   inner points of the moment cone.

\begin{thm}\label{thm:W(s)V(s)innermomentsequence}
For $s\in {\cS}$ the following are equivalent:
\begin{enumerate}
\item[\em (i)] $s$ is an inner point of the moment cone $\cS$.

\item[\em (ii)]\, $\cN_+(s)= \{0\}$. 

\item[\em (iii)]\,  $\cW(s)=\R^n$.

\item[\em (iv)]\, $I(s)=\R^n$.

\item[\em (v)]\, $\cV_+(s)=\R^n$.

\item[\em (vi)] $d(s)=0$.

\item[\em (vii)]\,  $\Im(s)=\R^m$.
\end{enumerate}
\end{thm}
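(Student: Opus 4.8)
The plan is to prove the seven statements equivalent by establishing a cycle of implications, using the previously proved structural results as much as possible. Several of these equivalences are already available from earlier sections: Proposition \ref{N-+(s)11} gives (i)$\Leftrightarrow$(ii) directly, and for inner points $s$ Theorem \ref{strictposLnonunique} together with Lemma \ref{strictypositivef} yields $\cW(s)=\cX=\R^n$, i.e.\ (i)$\Rightarrow$(iii). The reverse direction and the linking of the remaining four conditions (iv)--(vii) is where the differential-geometric machinery of this section enters.

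First I would dispose of the purely convex-analytic and definitional equivalences. The chain (vi)$\Leftrightarrow$(vii) is immediate from the definition $d(s)=\codim\Im(s)=m-\dim\Im(s)$, since $\Im(s)\subseteq\R^m$ is a linear subspace by Lemma \ref{lem:Im(s)}; thus $d(s)=0$ exactly when $\Im(s)=\R^m$. Next I would show (vii)$\Rightarrow$(iv): if $\Im(s)=\R^m$ then $\Im(s)^\perp=\{0\}$, so the defining intersection $I(s)=\bigcap_{v\in\Im(s)^\perp}\Zset(g_v)$ is over the single function $g_0\equiv 0$, whose zero set is all of $\R^n$, giving $I(s)=\R^n$. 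The sandwich inclusions of Lemma \ref{lem:W(s)I(s)V(s)}(iii), namely $\cW(s)\subseteq I(s)\subseteq\cV_+(s)$, then let me propagate fullness: from (iii) $\cW(s)=\R^n$ we force $I(s)=\R^n$, i.e.\ (iii)$\Rightarrow$(iv), and from (iv) we likewise get $\cV_+(s)=\R^n$, i.e.\ (iv)$\Rightarrow$(v).

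The crux is closing the loop back to the convex condition, for which I would prove (v)$\Rightarrow$(ii). Suppose $\cV_+(s)=\R^n$ but, for contradiction, $\cN_+(s)\neq\{0\}$; pick $p\in\cN_+(s)$, $p\neq 0$. Since $p$ is a nonzero element of $E=\Lin\sF$ and $\sF$ is a basis, $p$ does not vanish identically, so there exists $x_0\in\R^n$ with $p(x_0)\neq 0$; as $p\geq 0$ on all of $\cX=\R^n$ this means $p(x_0)>0$, hence $x_0\notin\Zset(p)$. But $x_0\in\R^n=\cV_+(s)$ means $p(x_0)=0$ for every $p\in\cN_+(s)$, a contradiction. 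Therefore $\cN_+(s)=\{0\}$, which is (ii). Together with Proposition \ref{N-+(s)11}, which supplies (i)$\Leftrightarrow$(ii), this completes a cycle (i)$\Rightarrow$(iii)$\Rightarrow$(iv)$\Rightarrow$(v)$\Rightarrow$(ii)$\Leftrightarrow$(i), and the two extra nodes (vi),(vii) are tied in via (vii)$\Leftrightarrow$(vi) and (vii)$\Rightarrow$(iv).

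To fully fasten (vi)/(vii) into the cycle I still owe one implication out of the convex condition into the differential one; the natural choice is (ii)$\Rightarrow$(vii), equivalently (i)$\Rightarrow$(vii). Here the geometric meaning of $\Im(s)$ is decisive: if $s$ is an inner point of $\cS$, then for a suitable representing measure $\mu_{(C,X)}$ one can perturb both the weights $c_j$ and the node positions $x_j$ freely in every direction while staying in $\cS$, so the range of the total derivative $DS_k(C,X)$ must fill all of $\R^m$. Concretely, if $\Im(s)\subsetneq\R^m$ there is a nonzero $\tilde g\in\Im(s)^\perp$, and by Lemma \ref{lem:W(s)I(s)V(s)}(ii) the associated $g=\langle s_\sF(\cdot),\tilde g\rangle\in E$ vanishes together with all its first partials on $\cW(s)$; I would argue that such a $g$ (or a suitable square/sum built from it) produces a nonzero element of $\cN_+(s)$, contradicting (ii). \textbf{The main obstacle} I anticipate is precisely this last step: extracting a genuine nonnegative annihilated function from the vanishing-to-first-order data on $\cW(s)$, since $g$ itself need not be sign-definite (as Example \ref{exmp:two} dramatically illustrates, where $p_1,p_2$ are not nonnegative yet a clever combination is). The clean route is likely to invoke (i)$\Leftrightarrow$(ii) to replace (ii) by ``$s$ is inner'' and then argue that innerness gives a representing measure whose derivative matrix has full rank by a dimension count against $\dim E=m$, thereby reaching $\Im(s)=\R^m$ directly and avoiding the delicate sign analysis altogether.
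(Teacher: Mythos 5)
Your proposal correctly establishes the equivalence of (i)--(v): (i)$\leftrightarrow$(ii) from Proposition \ref{N-+(s)11}, (i)$\rightarrow$(iii) via Lemma \ref{strictypositivef} and Theorem \ref{strictposLnonunique} (the same combination the paper itself invokes at the start of Section \ref{exposedfacesmomentcone}; the paper's proof of the theorem uses instead a short direct perturbation $s'=s-\varepsilon s_\sF(x)$), then (iii)$\rightarrow$(iv)$\rightarrow$(v) from the sandwich of Lemma \ref{lem:W(s)I(s)V(s)}(iii) and the trivial (v)$\rightarrow$(ii). It also correctly records (vi)$\leftrightarrow$(vii) and (vii)$\rightarrow$(iv). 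But the one implication that ties (vi)/(vii) to the other five --- some arrow \emph{out of} (i)--(v) \emph{into} $\{$(vi),(vii)$\}$ --- is precisely what you leave open, and neither of your two sketches closes it. The first sketch (manufacture a nonzero element of $\cN_+(s)$ from a nonzero $\tilde g\in\Im(s)^\perp$) fails for the reason you yourself identify: $g=\langle s_\sF(\cdot\,),\tilde g\rangle$ need not be sign-definite and $E$ is not closed under squaring, so there is no way to convert first-order vanishing on $\cW(s)$ into a nonnegative annihilated function. The second sketch (``innerness gives a representing measure with full-rank derivative by a dimension count'') is not an argument: a given representing measure of an inner point need not have surjective $DS_k$, and the Sard-type reasoning that would upgrade ``the image of $S_m$ contains a ball'' to ``some representing measure is a regular point'' is unavailable here, since $E\subseteq C^1(\R^n;\R)$ only, while Sard's theorem for a map $\R^{m(n+1)}\to\R^m$ requires $C^{mn+1}$ smoothness; for merely $C^1$ maps the critical values can fill open sets. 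So as written the equivalence of (vi)/(vii) with the rest is unproven.

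The irony is that the missing arrow follows in one line from pieces you already cite, provided you start from (iii) rather than (ii). If $\cW(s)=\R^n$ and $\tilde g\in\Im(s)^\perp$, then by Lemma \ref{lem:W(s)I(s)V(s)}(ii) the function $g=\langle s_\sF(\cdot\,),\tilde g\rangle$ vanishes identically on $\cW(s)=\R^n$, hence $g=0$; since $\sF$ is a basis of $E$, this forces $\tilde g=0$. Thus $\Im(s)^\perp=\{0\}$, i.e.\ (vii) holds --- no positivity of $g$ is needed at all. The paper instead proves (iii)$\rightarrow$(vii) constructively: since $f_1,\dots,f_m$ are linearly independent, there exist points $x_1,\dots,x_m$ with $s_\sF(x_1),\dots,s_\sF(x_m)$ linearly independent; each $x_i$ lies in $\cW(s)=\R^n$, so averaging representing measures $\mu_i$ having $x_i$ as an atom gives one $\mu\in\cM_s$ whose derivative matrix contains the $m$ independent columns $s_\sF(x_i)$, whence $\Im(s)=\R^m$ (and then (vii)$\rightarrow$(i) by the full-rank open-mapping argument, via Lemma \ref{lem:Im(s)}). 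Either repair completes your proof.
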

\begin{proof}
(i)$\leftrightarrow$(ii) follows from  Lemma \ref{N-+(s)}(iii).

(vi)$\leftrightarrow$(vii) is Definition  \ref{def:I(s)d(s)}.

(i)$\rightarrow$(iii): Let $x\in\R^n$. Since $s$ is an inner point, so is $s' := s -\varepsilon s(x)$ for some $\varepsilon>0$. If $\mu'$ is a representing measure of $s'$, then $\mu := \mu' + \varepsilon \delta_x$ represents $s$ and $\mu(\{x\})\geq \varepsilon >0$, so that  $x\in \cW(s)$.

(iii)$\rightarrow$(iv)$\rightarrow$(v) follows from Lemma \ref{lem:W(s)I(s)V(s)}(iii).

(v)$\rightarrow$(ii): Let $p\in \cN_+(s)$. Then, since $\rset^n=\cV_+(s)\subseteq \Zset(p)$ by (v), $p=0$.

(vii)$\to$(i): By Lemma  \ref{lem:Im(s)} there exists a representing measure $(C,X)$ of $s$ such that $DS_{k}(C,X)$ has full rank. But then an open neighborhood of $(C,X)$ is mapped onto an open neighborhood of $s$, i.e., $s$ is an inner point.

(iii)$\to$(vii): Since $\cW(s)=\R^n$ and the functions $f_1,...,f_m$ are linearly independent, there are points $x_1,...,x_m\in\R^n=\cW(s)$ such that the vectors $s_\sF(x_1),...,s_\sF(x_n)$ are linearly independent. Then for any $x_i$ there is an atomic measure $\mu_i$ such that $x_i\in\supp\mu_i$. Setting $\mu := \frac{1}{m}\sum_{i=1}^m \mu_i$,  $\mu$ is a representing measure of $s$ and $\range DS_{k_1+...+k_m}(\mu)$ is $m$-dimensional, so that $\R^m\subseteq\Im(s)\subseteq\R^m$.
\end{proof}

Since $s\in {\cS}$ is a boundary point if and only if it is not inner, the following corollary restates the preceding theorem.

\begin{cor}\label{cor:BoundaryPoints}
For each $\in {\cS}$ following statements are equivalent:
\begin{enumerate}
\item[\em (i)] $s$ is a boundary point of the moment cone.

\item[\em (ii)]  $\cN_+(s)\neq \{0\}.$

\item[\em (iii)]  $\cW(s)\subsetneq\R^n$.

\item[\em (iv)]  $I(s)\subsetneq\R^n$.

\item[\em (v)]  $\cV_+(s)\subsetneq\R^n$.

\item[\em (vi)]  $d(s)>0$.

\item[\em (vii)]  $\Im(s)\subsetneq\R^m$.
\end{enumerate}
\end{cor}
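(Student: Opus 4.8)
The plan is to observe that this corollary is nothing more than the logical negation of Theorem \ref{thm:W(s)V(s)innermomentsequence}, so essentially no new work is required. The starting point is the standard fact from convex geometry that a point $s$ belonging to the convex cone $\cS$ is either an interior point or a boundary point, and these two possibilities are mutually exclusive and exhaustive. Thus ``$s$ is a boundary point of $\cS$'' is exactly the negation of ``$s$ is an inner point of $\cS$'', i.e.\ the negation of statement (i) of the preceding theorem.

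Next I would invoke Theorem \ref{thm:W(s)V(s)innermomentsequence} directly: it asserts that statement (i) there (that $s$ is an inner point) is equivalent to each of the conditions $\cN_+(s)=\{0\}$, $\cW(s)=\R^n$, $I(s)=\R^n$, $\cV_+(s)=\R^n$, $d(s)=0$, and $\Im(s)=\R^m$. Since all seven statements of that theorem are mutually equivalent, their negations are mutually equivalent as well. Forming these negations yields precisely the seven conditions of the present corollary, so the proof is immediate.

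The only point deserving a word of care is the form in which the negations appear for the set-theoretic conditions. Here I would note that $\cW(s)$, $I(s)$, and $\cV_+(s)$ are by construction subsets of $\cX=\R^n$, and $\Im(s)$ is by construction a subspace of $\R^m$ (the latter by Lemma \ref{lem:Im(s)}); consequently the negation of each equality such as $\cW(s)=\R^n$ is the proper inclusion $\cW(s)\subsetneq\R^n$, and similarly for the others. Likewise $\cN_+(s)=\{0\}$ negates to $\cN_+(s)\neq\{0\}$ and $d(s)=0$ to $d(s)>0$ since $d(s)=\codim\Im(s)\geq 0$ always. I do not expect any genuine obstacle in this argument; the entire content is the bookkeeping of matching each condition of the corollary to the negation of the corresponding condition of Theorem \ref{thm:W(s)V(s)innermomentsequence}, and the proof can be stated in one or two sentences once this correspondence is made explicit.
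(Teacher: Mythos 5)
Your proposal is correct and coincides with the paper's own treatment: the paper introduces the corollary with the single remark that a point of $\cS$ is a boundary point if and only if it is not inner, so the corollary is exactly the negation of each statement of Theorem \ref{thm:W(s)V(s)innermomentsequence}. Your additional bookkeeping (that $\cW(s)$, $I(s)$, $\cV_+(s)$ are subsets of $\R^n$, that $\Im(s)$ is a subspace of $\R^m$, and that $d(s)\geq 0$) is the same implicit content, just spelled out.
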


\begin{cor}
Suppose that $s$ is a boundary point of $\cS$  with representing measure $(C,X)$. If\, $\codim\range DS(C,X) = 1$, then $\Im(s) = \range DS(C,X)$.
\end{cor}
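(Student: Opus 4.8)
The plan is to prove the equality by a straightforward dimension count, exploiting the automatic containment $\range DS(C,X)\subseteq\Im(s)$ together with the two dimension constraints coming from the hypotheses. The key observation is that the codimension-one assumption forces $\range DS(C,X)$ to be as large as it possibly can be inside $\Im(s)$, given that $s$ is a boundary point.

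First I would record that $\range DS(C,X)\subseteq\Im(s)$ holds automatically: the pair $(C,X)$ lies in $\cM_{k,s}$, and by Definition \ref{def:I(s)d(s)} the set $\Im(s)$ is the union over all such pairs of the ranges $\range DS_k(C,X)$. By Lemma \ref{lem:Im(s)} this union is a linear subspace of $\R^m$, so the containment is an inclusion of subspaces and yields $\dim\range DS(C,X)\leq\dim\Im(s)$.

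Next I would translate the two hypotheses into statements about dimensions. The assumption $\codim\range DS(C,X)=1$ gives $\dim\range DS(C,X)=m-1$. Since $s$ is a boundary point of $\cS$, Corollary \ref{cor:BoundaryPoints}, (i)$\leftrightarrow$(vii), gives $\Im(s)\subsetneq\R^m$, hence $\dim\Im(s)\leq m-1$. Combining these with the inclusion from the previous step produces the squeeze
\[
m-1=\dim\range DS(C,X)\leq\dim\Im(s)\leq m-1,
\]
so $\dim\Im(s)=\dim\range DS(C,X)=m-1$. A subspace that contains another subspace of the same finite dimension must equal it, whence $\Im(s)=\range DS(C,X)$.

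The argument is entirely elementary, so there is no serious obstacle; the only point demanding attention is that the boundary hypothesis is genuinely needed and must be invoked through the correct equivalence in Corollary \ref{cor:BoundaryPoints}. Without it, $\Im(s)$ could equal all of $\R^m$ (the inner-point case, by Theorem \ref{thm:W(s)V(s)innermomentsequence}), and the strict inclusion $\Im(s)\subsetneq\R^m$ that makes the dimension squeeze work would be unavailable, so the conclusion would fail.
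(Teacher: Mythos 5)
Your proof is correct and is essentially the paper's own argument: the paper compresses the same dimension squeeze into the single line $1 = \codim\range DS(C,X) \geq d(s) \geq 1$, using $d(s)\geq 1$ for boundary points (Corollary \ref{cor:BoundaryPoints}, item (vi)) where you invoke the equivalent condition $\Im(s)\subsetneq\R^m$ via item (vii). Both versions rest on the automatic inclusion $\range DS(C,X)\subseteq\Im(s)$ of subspaces and the resulting equality of dimensions.
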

\begin{proof}
Since $1 = \codim\range DS(C,X) \geq d(s) \geq 1$, it follows that $(C,X)$ is a representing measure of $\Im(s)$.
\end{proof}

The following proposition collects a number of useful properties of the set $\cM_{k,s}\equiv S_k^{-1}(s)$ of at most $k$-atomic representing measures of $s$.

\begin{prop}
Suppose that $n\in \nset$ and $E \subset C^r(\R^n,\R)$, $r\geq 0$. Let  $s\in\cS$.
\begin{enumerate}
\item[\em (i)]  The set $S_{k}^{-1}(s)$ of $k$-atomic representing measures $(C,X)$ of $s$ is closed.

\item[\em (ii)]  $S_{k+1}^{-1}(s)_{|c_{k+1}=0} = S_{k}^{-1}(s)\times\{0\}\times\R^n$.

\item[\em (iii)]  Suppose that $(C,X)$ is an at most $k$-atomic representing measure of $s$ and $DS_{k}(C,X)$ has full rank. In a neighborhood of $(C,X)$,  $S_{k}^{-1}(s)$  is a $C^r$-manifold of dimension $k(n+1)-m$ in $\R^{k(n+1)}$. The tangent space $T_{(C,X)}S_{k}^{-1}(s)$ at $(C,X)$ is
\begin{equation}\label{eq:tangentspace}
T_{(C,X)}S_{k}^{-1}(s) = \ker DS_{k}(C,X).
\end{equation}

\item[\em (iv)] If $s$ is regular, then $S_{k}^{-1}(s)$ is a $C^r$-manifold and (\ref{eq:tangentspace}) holds at any representing measure $(C,X)$.

\item[\em (v)] $\cW(s) =\{x\in\R^n : (c_1,...,c_k;x,x_2,...,x_k)\in S_{k}^{-1}(s),\,  c_1 > 0,\, \text{for some}\, k\geq 1\}$.
\end{enumerate}
\end{prop}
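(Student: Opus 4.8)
The plan is to handle the five parts in order of increasing effort: (i) and (ii) are immediate, (iii) rests on the submersion theorem, (iv) is (iii) applied fibrewise, and (v) on Richter--Tchakaloff.

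For (i) I would simply use that $S_k$ is continuous (indeed $C^r$) on its domain $\R_>^k\times\R^{nk}$; since $\{s\}$ is closed in $\R^m$, its preimage $S_k^{-1}(s)$ is closed in that domain. For (ii) the assertion is a direct computation: putting $c_{k+1}=0$ into the defining sum gives
\[
S_{k+1}\big((c_1,\dots,c_k,0),(x_1,\dots,x_{k+1})\big)=\sum_{j=1}^{k}c_j\,s_\sF(x_j)=S_k\big((c_1,\dots,c_k),(x_1,\dots,x_k)\big),
\]
which does not involve $x_{k+1}$. Hence the value $s$ is attained precisely when $((c_1,\dots,c_k),(x_1,\dots,x_k))\in S_k^{-1}(s)$, with $x_{k+1}\in\R^n$ free, which is the claimed product.

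For (iii) I would invoke the submersion (regular value) theorem, tacitly assuming $r\geq 1$ so that $DS_k$ exists and a genuine $C^r$-manifold structure makes sense. Since $S_k\colon\R_>^k\times\R^{nk}\to\R^m$ is $C^r$ and $DS_k(C,X)$ has full rank $m$ (so $k(n+1)\geq m$), the map is a submersion at $(C,X)$; the implicit function theorem then realises $S_k^{-1}(s)$ near $(C,X)$ as a $C^r$-manifold of dimension $k(n+1)-m$. The tangent-space identity follows by differentiating the relation $S_k\equiv s$ along curves in the fibre, which shows $T_{(C,X)}S_k^{-1}(s)\subseteq\ker DS_k(C,X)$, with equality by the matching dimensions $k(n+1)-m$. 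Part (iv) I would then obtain by running (iii) at every point: since $s$ being regular means (as a regular value of $S_k$) that $DS_k(C,X)$ has full rank at every $(C,X)\in S_k^{-1}(s)$, the local charts of (iii) cover the whole fibre and (\ref{eq:tangentspace}) holds throughout.

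Finally, for (v) I would unwind the definition of $\cW(s)$. The inclusion ``$\supseteq$'' is immediate: given a tuple $(c_1,\dots,c_k;x,x_2,\dots,x_k)\in S_k^{-1}(s)$ with $c_1>0$, the measure $\mu=\sum_j c_j\delta_{x_j}$ represents $s$ and satisfies $\mu(\{x\})\geq c_1>0$, whence $x\in\cW(s)$. For ``$\subseteq$'' I would take $x\in\cW(s)$ and a representing measure $\mu\in\cM_s$ with $c:=\mu(\{x\})>0$, split off the atom as $\mu=c\,\delta_x+\mu'$ with $\mu'(\{x\})=0$, and note that $\mu'$ represents $s-c\,s_\sF(x)$; Richter--Tchakaloff (Proposition \ref{hrichtertheorem}, Corollary \ref{richtercor}) supplies a finitely atomic $\nu=\sum_{j=2}^{k}c_j\delta_{x_j}$ representing the same sequence, so that $c\,\delta_x+\nu$ is a finitely atomic representing measure of $s$ with $x$ as an atom of positive weight, giving the required tuple. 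The hard part will be precisely this forward inclusion: one must ensure $x$ itself persists as an atom after passing to a finitely atomic measure, which is why I would peel off the mass at $x$ \emph{before} applying Richter--Tchakaloff to the remainder, rather than applying it to $\mu$ directly.
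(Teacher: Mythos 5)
Your proposal is correct and follows essentially the same route as the paper's own (very terse) proof: continuity of $S_k$ for (i), direct computation for (ii), the implicit function theorem for (iii) and (iv), and unwinding the definitions of $\cW(s)$ and $S_k$ for (v). Your extra care in (v) --- peeling off the atom $c\,\delta_x$ \emph{before} invoking Richter--Tchakaloff so that $x$ survives as an atom of a finitely atomic representing measure --- is precisely the detail hidden behind the paper's phrase ``follows easily from the definitions,'' and your remark that (iii) tacitly needs $r\geq 1$ is a fair observation about the statement's hypotheses.
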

\begin{proof}
The continuity of the map $S_{k}$ gives (i). (ii) is obvious.

(iii) and (iv) are straightforward applications of the implicit function theorem.

(v) follows easily from the definitions of $\cW(s)$ and $S_{k}$.
\end{proof}

\noindent
{\bf Acknowledgement:} The authors  thank Profs. G. Blekherman, L. Fialkow and L. Tuncel for valuable discussions at the Oberwolfach meeting, March 2017.

\bibliographystyle{amsalpha}

\end{document}